\pgfplotsset{compat=1.11}
\newtheorem{result}{Result}
\newtheorem{obs}{Observation}
\newtheorem{theorem}{Theorem}
\newtheorem{lemma}{Lemma}
\newtheorem{corollary}{Corollary}
\newtheorem{prop}{Proposition}
\newtheorem{problem}{Problem}
\providecommand{\subjclass}[1]{\textbf{2010 AMS Subject Class:} #1}
\providecommand{\keywords}[1]{\textbf{Keywords:} #1}
\title{Dominator Colorings of Digraphs}
\author[1]{Michael Cary\footnote{macary@mix.wvu.edu}}
\affil[1]{West Virginia University}
\begin{document}
\maketitle
\begin{abstract}
This paper serves as the first extension of the topic of dominator colorings of graphs to the setting of digraphs. We establish the dominator chromatic number over all possible orientations of paths and cycles. In this endeavor we discover that there are infinitely many counterexamples of a graph and subgraph pair for which the subgraph has a \emph{larger} dominator chromatic number than the larger graph into which it embeds. Finally, a new graph invariant measuring the difference between the dominator chromatic number of a graph and the chromatic number of that graph is established and studied. The paper concludes with some of the possible avenues for extending this line of research.
\end{abstract}

\subjclass{05C69, 05C20, 05C15}

\keywords{dominator coloring, digraph, domination}

\section{Introduction}\label{s1}
Dominator colorings of graphs are a variant of the longstanding problem of finding a proper coloring of the vertex set of a graph. The topic of dominator colorings can be traced back to the work of Gera in \cite{gera2007}. A proper dominator coloring of a graph $G$ is a coloring $\mathcal{C}$ of the vertices $V(G)$ such that $\mathcal{C}$ is a proper vertex coloring and every vertex $v\in V(G)$ dominates some color class in $\mathcal{C}$. This type of graph coloring problem helps to relate other problems involving domination and related topics in graph theory \cite{harary1996,haynes1998}.

Initial results in this relatively new area have been bountiful, ranging from general results to tighter results on special classes of graphs \cite{abid2019,alikhani2015,arumugam2012,gera2006,gera2007bi,kavitha2012,merouane2012,panda2015}. Results on dominator colorings of graphs that are products of elementary graphs, such as paths and cycles, as well as other graph operations, were studied in \cite{chen2017} and \cite{paulraja2016}.

Extending this foundational line of research, \cite{baogen2000} made headway in finding extremal graphs with respect to various domination parameters. Algorithmic aspects were studied in \cite{arumugam2011} in which it was determined that even for several rather elementary classes of graphs, dominator colorings cannot be found in polynomial time. This result answered a standing question posed in \cite{chellali2012}. Further results on computation complexity of pertinent algorithms were developed in \cite{shalu2017}, and improved algorithms for special classes of graphs, such as trees were given in \cite{merouane2015}. In addition, These results have proven useful for the development of many domination based network theoretic tools and applications, including those found in \cite{blair2011,davis2016,desormeaux2018,haynes2002,haynes2003,somasundaram1998}.

Broadly, this field has blossomed into a rather diverse collection of variants of domination and independence, in some cases combined with graph coloring problems. Examples include total domination \cite{henning2015,kazemi2013,kazemi2015}, power domination \cite{chang2012,dorbec2008}, broadcast domination \cite{brewster2013,dunbar2006,erwin2004,mynhardt2017,teshima2012}, and geodetic domination \cite{escuadro2011}, among others. In addition to these highly similar problems, more nuanced relations between domination and other areas of graph theory exist, such as decycling or network dismantling problems \cite{bau2007,beineke1997,cary2018,yang2019}.

The direction this paper takes is to initiate the extension of the study of dominator colorings of graphs to the natural setting of digraphs. By definition digraphs have notions of domination embedded into the foundation of their own existence; the definition of an arc as an ordered pair, rather than an unordered pair, of vertices speaks strongly to this idea. While various forms of vertex coloring problems exist in digraphs beyond the standard problem, see, e.g., \cite{neumann1982}, perhaps none is quite as suited for digraphs as dominator coloring.

Formally, a dominator coloring of a digraph $D$ is a vertex coloring $\mathcal{C}$ of the vertex set $V(D)$ such that $\mathcal{C}$ is a proper vertex coloring and for all $v\in V(D)$ there exists some $C\in\mathcal{C}$ such that for all $u\in C$ the arc $vu$ exists as a member of the arc set $A(D)$. As will be seen throughout this paper, even in the most elementary of settings this problem becomes quite tedious rather quickly.

Before beginning this study, we take the time to elucidate several highly used notations and assumptions. All digraphs are simple, loopless, and connected unless specified otherwise. The notation $G_{D}$ of underling graph of a digraph $D$. A directed path $P_{n}=v_{1}v_{2}\dots v_{n}$ is the orientation of a path of length $n$ whose out-degree sequence is given by $\{1,1,1,\dots,1,1,0\}$. A directed cycle $C_{n}=v_{1}v_{2}\dots v_{n}v_{1}$ is the orientation of a cycle of length $n$ whose out-degree sequences is given by $\{1,1,1,\dots,1,1,1\}$. 

To maintain traditional (dominator) chromatic number notation, we use the notation $\chi_{d}(D)$ throughout this text. It is very important to note now that the value $\chi_{d}(D)$ is in reference to the cardinality of a smallest possible set of color classes used in any proper dominator coloring of any possible orientation of the digraph $D$. That is to say, this paper focuses in particular with the problem of finding the smallest possible (minimum) dominator coloring, i.e., the dominator chromatic number, over all possible orientations of a graph. The problem of finding the dominator chromatic number of specific orientations of digraphs is not addressed in this work.

\section{Preliminary Results}\label{s2}
In this section we provide some preliminary results on elementary digraph structures and dominator colorings of digraphs. We begin with two trivial observations and a result that shows the incredible nature of dominator colorings of digraphs.
\begin{obs}\label{o1}
For any digraph $D$, we have that $\chi_{d}(D)\geq\chi(D)$.
\end{obs}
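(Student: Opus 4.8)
The plan is to show that any proper dominator coloring is in particular a proper coloring, so the minimum number of colors needed for the former cannot be smaller than the minimum needed for the latter. First I would fix an arbitrary orientation $D$ and recall the two defining conditions of a dominator coloring $\mathcal{C}$: that $\mathcal{C}$ is a proper vertex coloring, and that every vertex dominates some color class. The crucial observation is that the second condition is an \emph{additional} constraint layered on top of properness; dropping it can only enlarge the collection of admissible colorings.

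\medskip
Concretely, I would argue that every proper dominator coloring $\mathcal{C}$ of $D$ is itself a proper coloring of the underlying structure by definition, so the set of proper dominator colorings is a subset of the set of proper colorings. Taking minima over a smaller set yields a value that is at least as large, giving $\chi_{d}(D)\geq\chi(D)$ for the fixed orientation. Since the excerpt defines $\chi_{d}(D)$ as the minimum taken over \emph{all} orientations, I would then note that the same inequality survives the outer minimization: for each orientation the dominator chromatic number dominates the chromatic number of the underlying graph (the chromatic number being orientation-independent, as it depends only on $G_{D}$), so the minimum over orientations of $\chi_{d}$ is still bounded below by $\chi$.

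\medskip
I do not anticipate a genuine obstacle here, as the statement is essentially a containment-of-constraints argument; the only point requiring a little care is bookkeeping the two layers of minimization introduced by the paper's convention that $\chi_{d}(D)$ ranges over all orientations. The cleanest route is to prove the inequality orientation-by-orientation and then observe that $\chi(D)=\chi(G_{D})$ is constant across orientations, so nothing is lost when we minimize $\chi_{d}$ over orientations. This reduces the whole claim to the one-line remark that imposing the domination requirement can never decrease the number of colors needed for properness alone.
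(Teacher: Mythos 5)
Your proposal is correct and is exactly the justification the paper leaves implicit: Observation~\ref{o1} is stated without proof as a triviality, and the intended argument is precisely yours, that the set of proper dominator colorings is contained in the set of proper colorings, so the minimum over the smaller set cannot be smaller. Your extra care in noting that $\chi(D)=\chi(G_{D})$ is orientation-independent, so the inequality survives the paper's convention of minimizing $\chi_{d}$ over all orientations, is a sound (and welcome) piece of bookkeeping that the paper does not spell out.
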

\begin{obs}\label{o2}
For any digraph $D$, we have that $\chi_{d}(D)\leq|V(D)|$.
\end{obs}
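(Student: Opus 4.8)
The plan is to prove the bound by exhibiting an explicit dominator coloring that uses exactly $|V(D)|$ color classes; since $\chi_{d}(D)$ is the size of a \emph{smallest} valid dominator coloring, producing a single such coloring immediately yields the inequality. First I would take the ``rainbow'' coloring $\mathcal{C}$ that assigns to each vertex $v\in V(D)$ its own distinct color, so that $\mathcal{C}$ consists of $|V(D)|$ singleton classes, one per vertex.

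Next I would verify that $\mathcal{C}$ meets both requirements in the definition of a dominator coloring. Properness is immediate: no two vertices share a color, so in particular no two vertices adjacent in the underlying graph $G_{D}$ share a color, whence $\mathcal{C}$ is a proper vertex coloring. For the dominator condition, I would note that every color class is a singleton $\{v\}$ and that each vertex $v$ dominates its own class $\{v\}\in\mathcal{C}$; this makes the dominator requirement hold for all $v\in V(D)$ simultaneously. Having exhibited a valid dominator coloring with $|\mathcal{C}|=|V(D)|$, I would conclude $\chi_{d}(D)\leq|V(D)|$ by minimality.

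The one point requiring care, and the only real obstacle, is the interpretation of the dominator condition in the directed, loopless setting. Read strictly, the requirement that ``for all $u\in C$ the arc $vu$ exists'' cannot be satisfied by the class $C=\{v\}$, since the arc $vv$ is forbidden by looplessness. I would therefore make explicit at the outset that a vertex dominates its own singleton color class, which is the natural directed analogue of self-domination in the undirected theory of \cite{gera2007} and is precisely what guarantees that even sink vertices of out-degree zero, such as the terminal vertex $v_{n}$ of a directed path $P_{n}$, dominate some class. With this convention in place the verification above goes through without incident, and the bound follows.
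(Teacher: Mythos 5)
Your proposal is correct: the paper states this as a trivial observation with no proof at all, and the rainbow coloring of $|V(D)|$ singleton classes is precisely the argument it implicitly has in mind, with properness and domination verified exactly as you do. Your caveat about singleton self-domination is well placed rather than pedantic --- read literally, the paper's loopless definition would leave every sink (e.g., the terminal vertex $v_{n-1}$ of a directed path) unable to dominate any nonempty class, so the convention you make explicit is genuinely necessary for the observation to hold, and it is tacitly assumed elsewhere in the paper, notably in Proposition \ref{p1} and Observation \ref{o3}, both of which require rainbow colorings of digraphs containing a vertex of out-degree zero to be valid dominator colorings.
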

\begin{lemma}\label{l1}
It is \textbf{NOT} true that for any digraph $D$ and sub-digraph $H\subset D$, $\chi_{d}(H)\leq\chi_{d}(D)$.
\end{lemma}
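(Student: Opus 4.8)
The statement is negative, so the plan is to exhibit an explicit infinite family of pairs $(D,H)$ with $H \subset D$ yet $\chi_d(H) > \chi_d(D)$. The family I would use takes $D$ to be an orientation of the fan graph obtained from a path on $n$ vertices $v_1,\dots,v_n$ by adding a single new vertex $u$ adjacent to every path vertex, and $H$ to be the sub-digraph obtained by deleting $u$ together with its incident arcs, whose underlying graph is $P_n$. Since $\chi_d$ is evaluated as a minimum over all orientations of the underlying graph, it suffices to compare the minimum over orientations of the fan against the minimum over orientations of the path.

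First I would bound $\chi_d(D)$ from above by $3$. Orient every spoke edge toward the hub $u$, so that $u$ becomes a sink; the sink $u$ must, and can, occupy its own singleton color class, and because every path vertex has an arc into $u$, every path vertex dominates the class $\{u\}$. It then remains only to properly color the path vertices, which is bipartite and hence needs two additional colors, yielding a valid dominator coloring with three classes. For the matching lower bound I would invoke Observation~\ref{o1}: the fan contains the triangle $u v_i v_{i+1}$, so $\chi_d(D) \geq \chi(D) \geq 3$, whence $\chi_d(D) = 3$ for every $n$.

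The hard part will be the lower bound showing that $\chi_d(H)$ is unbounded, since this must hold over every orientation of $P_n$ and every dominator coloring, not merely a convenient one. Here I would argue by bounding how many vertices can dominate a single color class. In any orientation of a path each vertex has out-degree at most $2$, so a class dominated through some vertex's out-neighborhood has size at most $2$; a two-element independent class $\{w_1,w_2\}$ can be dominated only by a common out-neighbor of both, of which a path admits at most one, while a singleton class $\{w\}$ can be dominated only by $w$ itself or by an in-neighbor of $w$, of which there are at most two. Hence at most three vertices can dominate any single color class. Assigning to each vertex one class that it dominates, the fibers of this assignment have size at most $3$, so at least $n/3$ distinct classes are used and $\chi_d(H) \geq n/3$.

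Combining the two estimates, for every $n \geq 10$ we obtain $\chi_d(H) \geq n/3 > 3 = \chi_d(D)$ with $H \subset D$, and since there are infinitely many such $n$ this produces infinitely many counterexamples and disproves the claimed inequality. The only delicate point to fix is the self-domination convention for sinks used in both bounds; once that is settled, the counting argument for the path is the crux, while the evaluation of the fan is a direct verification.
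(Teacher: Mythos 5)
Your construction is in the right spirit---it is essentially the paper's own follow-up family $\tilde{C_{n}}$ (a cycle plus a universal sink, discussed immediately after Lemma~\ref{l1}), with a fan in place of a wheel---and the evaluation $\chi_{d}(D)\leq 3$ for the fan is fine. But the crux of your argument, the lower bound $\chi_{d}(P_{n})\geq n/3$, is false for the invariant this paper studies: Theorem~\ref{t1} gives $\chi_{d}(P_{4k})=k+2$, and $k+2<4k/3$ once $k\geq 7$, so your bound flatly contradicts the paper's exact value for every $n=4k\geq 28$. The source of the contradiction is exactly the ``delicate point'' you deferred. Your count forces \emph{every} vertex, sinks included, to dominate some class (via closed out-neighborhoods, so every sink would have to be uniquely colored). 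The paper's operative convention is the opposite: in its optimal coloring of $P_{4k+1}$ with out-degree sequence $\{0,2,0,\dots,2,0\}$, all $2k+1$ out-degree-zero vertices share the single class $c_{2}$ and dominate nothing; only vertices of positive out-degree are required to dominate. So the convention is not a detail to be settled afterwards---it changes the arithmetic, and what you have proved is a lower bound for a strictly more demanding invariant, which does not lower-bound the paper's $\chi_{d}$. (There is also a terminology slip: a two-element class is dominated by a common \emph{in}-neighbor of its members, not a common out-neighbor.)

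The gap is repairable within the paper's convention. Two adjacent vertices of a path cannot both be sinks, so at least $\lfloor n/2\rfloor$ vertices have positive out-degree and must each dominate a class; with no self-domination, a singleton class has at most two dominators (its in-neighbors) and a two-element class at most one (the unique common in-neighbor), and no larger class is dominated at all. The fibers of your assignment then have size at most $2$, giving $\chi_{d}(P_{n})\geq\lfloor n/2\rfloor/2\approx n/4$, which is consistent with Theorem~\ref{t1} and still unbounded, so the fan family does yield infinitely many counterexamples once this is fixed (the hub, being a sink, is exempt, just as in the paper's $\tilde{C_{n}}$). For comparison, the paper's actual proof of Lemma~\ref{l1} is far shorter: it exhibits the single pair $P_{4}\subset C_{4}$ with $\chi_{d}(P_{4})=3>2=\chi_{d}(C_{4})$, deferring the two evaluations to Theorems~\ref{t1} and~\ref{t2}; your route, once corrected, has the advantage of being self-contained and of producing unbounded dominator discrepancy $\delta(D,H)$ rather than a single example.
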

\begin{proof}
Consider the path $P_{4}$ with out-degree sequence $\{0,2,0,1\}$ and the cycle $C_{4}$ with out-degree sequence $\{0,2,0,2\}$. As will be shown later, these are orientations admitting minimum dominator colorings of each digraph. However, $\chi_{d}(P_{4})=3>2=\chi_{d}(C_{4})$.
\end{proof}

This result is what makes dominator colorings of digraphs stand out from all other variants of vertex coloring. However, using the fact that $\chi_{d}(P_{n})\leq\chi_{d}(C_{n})$ for $m>4$ will be very important later on in this work, and in fact will be proven in Section \ref{s3} in between proving the minimum dominator chromatic number of paths and cycles. However, as it turns out, this is not a unique instance. We present another, similar result showing that there are infinitely many counterexamples to the claim that for every digraph $D$ and subdigraph $H$, $\chi_{d}(H)\leq\chi_{d}(D)$.

It is important to emphasize again that throughout this paper we are concerned with finding the smallest possible dominator coloring over all orientations of a particular graph structure.  It is easy to see that we may reduce the number of colors used in a minimum proper coloring of a given orientation of a digraph by embedding this digraph into a larger digraph. In addition to the pair $(C_{4},P_{4})$, it turns out that there are indeed other examples of this phenomenon. With the goal of constructing an infinite family of digraphs satisfying this relationship ($\chi_{d}(H)>\chi_{d}(D)$ for a digraph $D$ and a subdigraph $H$ of $D$), consider the following example (which spans two figures). 

\begin{figure}[h!]
\centering
\begin{tikzpicture}[-,>=stealth',shorten >=1pt,auto,node distance=2cm,
                    thick,main node/.style={circle,draw}]
  \node[main node] (A)					{$v_{1}$};
  \node[main node] (B) [right of=A]		{$v_{2}$};
  \node[main node] (C) [right of=B] 	{$v_{3}$};
  \node[main node] (D) [below of=C] 	{$v_{4}$};
  \node[main node] (E) [left of=D]      {$v_{5}$};
  \node[main node] (F) [left of=E]      {$v_{6}$};  
  \draw[thick,->] (A) to (B);
  \draw[thick,->] (B) to (C);
  \draw[thick,->] (C) to (D);
  \draw[thick,->] (D) to (E);
  \draw[thick,->] (E) to (F);
  \draw[thick,->] (F) to (A);
\end{tikzpicture}
\caption{This digraph $H$ is an orientation of $C_{6}$ which requires $6$ colors in a minimum proper dominator coloring.}
\label{f1}
\end{figure}
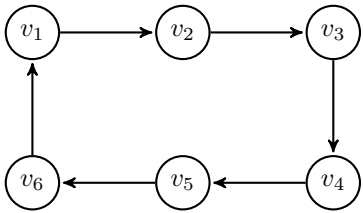
\begin{figure}[h!]
\centering
\begin{tikzpicture}[-,>=stealth',shorten >=1pt,auto,node distance=2cm,
                    thick,main node/.style={circle,draw}]
  \node[main node] (A)					{$v_{0}$};
  \node[main node] (B) [above of=A]		{$v_{1}$};
  \node[main node] (C) [left of=B]  	{$v_{2}$};
  \node[main node] (D) [right of=B] 	{$v_{3}$};
  \node[main node] (E) [below of=A]     {$v_{4}$};
  \node[main node] (F) [left of=E]      {$v_{5}$};  
  \node[main node] (G) [right of=E]     {$v_{6}$};  
  \draw[thick,->] (B) to (A);
  \draw[thick,->] (C) to (A);
  \draw[thick,->] (D) to (A);
  \draw[thick,->] (E) to (A);
  \draw[thick,->] (F) to (A);
  \draw[thick,->] (G) to (A);
  \draw[thick,->] (C) to (B);
  \draw[thick,->] (B) to (D);
  \draw[thick,->] (D) to (G);
  \draw[thick,->] (G) to (E);
  \draw[thick,->] (E) to (F);
  \draw[thick,->] (F) to (C);
\end{tikzpicture}
\caption{A digraph $D$ satisfying $H\subset D$ and $\chi_{d}(D)=3<6=\chi_{d}(H)$.}
\label{f2}
\end{figure}
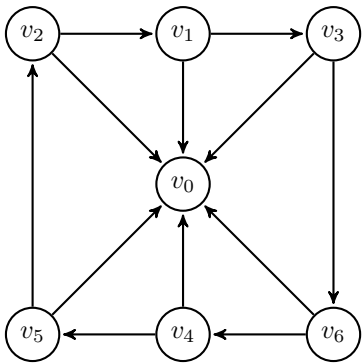

While the formal results for the dominator chromatic number of a cycle will be proven later in this paper (see Theorem \ref{t2}), we claim now that it is larger than the chromatic number of the underlying cycle, which is either two or three depending on the parity of the size of the cycle. By adding a single vertex that is adjacent to every vertex in the cycle, we may orient this graph such that the new vertex is a sink and the remaining vertices are oriented into a directed cycle, thereby allowing us to color the cycle on either two or three colors, depending on parity, and then using only one more color for the sink. Call such a graph (including the sink) $\tilde{C_{n}}$. The family of digraphs $\{\tilde{C_{n}}\}_{n=7}^{\infty}$ constitutes infinitely many counterexamples to the claim that for any digraph $D$ and subdigraph $H$, $\chi_{d}(H)\leq\chi_{d}(D)$. The reason that the index starts at seven is a consequence of the dominator chromatic number of cycles and comes from the result to be proven in Theorem \ref{t2}.

To formalize this observation as a problem in the topic of dominator colorings of digraphs, we introduce the following notation. Let $H\subset D$ be a sub-digraph of a digraph $D$ with $H$ and $D$ satisfying $\chi_{d}(H)>\chi_{d}(D)$. Let $\delta(D,H)=\chi_{d}(H)-\chi_{d}(D)$ denote the \textit{dominator discrepancy} of $H$ in $D$. A very interesting problem not addressed in this paper would be to find: (1) which digraphs have positive dominator discrepancies; (2) what the largest dominator discrepancies are for various families of digraphs; (3) which sub-digraphs are responsible for the largest dominator discrepancies in given families of digraphs; and (4) do some particular families of digraphs and particular families of sub-digraphs of these digraphs offer well-parameterized dominator discrepancies?

We proceed by studying perhaps the two most fundamental structures, paths and cycles, in their directed setting as a basis for the main results to come in Section \ref{s3}. While this paper is concerned with smallest possible dominator colorings, we consider now the natural questions of finding the dominator chromatic number of directed paths and cycles. In the process, we find an orientation which required the largest possible number of colors for a minimum proper dominator coloring over all orientations of paths and cycles, namely, the directed path and the directed cycle.

\begin{prop}\label{p1}
For any directed path $P$ of order $n$, we have that $\chi_{d}(P)=n$.
\end{prop}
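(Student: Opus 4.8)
The plan is to establish matching bounds. The upper bound $\chi_{d}(P)\le n$ is immediate from Observation \ref{o2}; I would additionally note that it is attained, since assigning every vertex its own color is a legitimate dominator coloring of $P$: each vertex $v_{i}$ with $i<n$ then dominates the singleton class $\{v_{i+1}\}$ determined by its lone out-arc, and the sink $v_{n}$ dominates its own class. The real content is the reverse inequality $\chi_{d}(P)\ge n$, i.e., showing that \emph{every} proper dominator coloring of $P$ is forced to give all $n$ vertices pairwise distinct colors.

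The structural fact I would exploit is that each vertex $v_{i}$ with $1\le i\le n-1$ has a single out-neighbor, namely $v_{i+1}$. Consequently the only color class $v_{i}$ can dominate is $\{v_{i+1}\}$, and for $v_{i}$ to dominate any class at all this set must itself be a color class. Running this observation over all $i$ from $1$ to $n-1$ forces each of $v_{2},\dots,v_{n}$ to occupy a color class of its own, which already accounts for $n-1$ distinct colors and, in particular, pins down the color of the sink.

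The step I expect to be the crux is the source $v_{1}$, because no arc enters $v_{1}$ and so the forcing above never constrains it directly. Here I would argue that $v_{1}$ cannot recycle a color already in use: properness rules out the color of its out-neighbor $v_{2}$, and taking the color of any $v_{j}$ with $j\ge 3$ would place two vertices in that class, contradicting the fact (just established from $v_{j-1}$) that $\{v_{j}\}$ must be a singleton class. Hence $v_{1}$ is also forced into a class of its own, giving $n$ colors in total and proving $\chi_{d}(P)\ge n$; together with the upper bound this yields $\chi_{d}(P)=n$. The only point needing care beyond this is fixing the convention by which the out-degree-zero sink dominates a class, but since the lower-bound argument never invokes the sink's own domination requirement, that subtlety does not affect the counting.
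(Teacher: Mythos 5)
Your proof is correct, and it takes a genuinely different route from the paper. The paper argues by induction on the order of the path: it deletes the source, invokes the inductive hypothesis that the shorter directed path needs $n-1$ colors, and then shows the source cannot reuse any color (reusing $c(v_{i})$ would destroy the domination property of $v_{i-1}$), forcing a fresh color. Your argument is direct and non-inductive: you observe that every vertex of out-degree one has out-neighborhood a single vertex, so the only class it could possibly dominate is the singleton consisting of its out-neighbor, and hence that singleton must be a full color class. Applied simultaneously at all $n-1$ non-sink vertices, this forces $v_{2},\dots,v_{n}$ into pairwise distinct singleton classes, and the source $v_{1}$ cannot join any of them without violating singletonhood (your appeal to properness for $c(v_{1})\neq c(v_{2})$ is in fact redundant, since $\{v_{2}\}$ is already forced to be a singleton class by the domination requirement at $v_{1}$). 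What your approach buys is a stronger and cleaner statement --- every dominator coloring of the directed path is rainbow, with all classes singletons --- obtained in one pass, whereas the paper's induction repeats the same local observation once per vertex and leans on a tacit restriction step (that a minimum coloring of $P$ restricts to a dominator coloring of $P'$) that your global argument sidesteps entirely. You were also right to flag the sink convention: under the paper's literal definition a vertex of out-degree zero dominates no class, so one must adopt the standard convention that sinks are exempt (or dominate their own class); as you note, your lower bound never invokes the sink's domination requirement, so the result is insensitive to this choice.
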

\begin{proof}
The proof is by induction on the length of the directed path. As our basis is obvious, assume that for every directed path of length $k<n$ we have that $\chi_{d}(P)=k$ and consider a directed path of length $n$ given by $P=v_{0}v_{1}\dots v_{n-1}$. If we remove the vertex $v_{0}$ we obtain a directed path $P^{\prime}$ of length $n-1$ which, by our inductive hypothesis, has $\chi_{d}(P^{\prime})=n-1$. We immediately see that $c(v_{0})\neq c(v_{1})$ must hold, else we do not have a proper coloring of $P$. If we chose to color $v_{0}$ so that $c(v_{0})=c(v_{i})$ for some $i\in\{2,\dots, n-1\}$ then the vertex $v_{i-1}$ would no longer dominate a color class. Therefore we must assign a new color to $v_{0}$ and obtain that $\chi_{d}(P)=n$.
\end{proof}
\begin{prop}\label{p2}
For any directed cycle $C$ or order $n$, we have that $\chi_{d}(C)=n$.
\end{prop}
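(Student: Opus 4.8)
The plan is to pin down exactly which color classes a vertex of the directed cycle can possibly dominate, and then to show that this forces every color class to be a singleton. First I would record the one structural fact that drives everything: in the directed cycle $C=v_{1}v_{2}\dots v_{n}v_{1}$ the out-degree sequence is $\{1,1,\dots,1\}$, so each vertex $v_{i}$ has out-degree exactly one, its sole out-neighbor being $v_{i+1}$ (with indices read modulo $n$, so $v_{n+1}=v_{1}$).

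Next I would translate the dominator condition through this observation. For $v_{i}$ to dominate a color class $C$, the definition demands that the arc $v_{i}u$ exist for every $u\in C$; equivalently, every member of $C$ must be an out-neighbor of $v_{i}$. Since the only out-neighbor of $v_{i}$ is $v_{i+1}$, any color class dominated by $v_{i}$ must satisfy $C\subseteq\{v_{i+1}\}$. Color classes are nonempty, and $v_{i}$ cannot dominate the class containing itself (that would require the loop $v_{i}v_{i}$, which is forbidden since $D$ is loopless), so the only possibility is $C=\{v_{i+1}\}$. Hence the dominator requirement at $v_{i}$ forces $v_{i+1}$ to be alone in its color class.

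Finally I would let $i$ range over all of $\{1,\dots,n\}$. Because the cycle is closed, as $i$ runs through every index the out-neighbor $v_{i+1}$ runs through every vertex of $C$, so each vertex must be a singleton color class. This yields $n$ pairwise distinct color classes and hence $\chi_{d}(C)\geq n$. Combining with the trivial upper bound $\chi_{d}(C)\leq|V(C)|=n$ from Observation \ref{o2} gives $\chi_{d}(C)=n$.

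I do not expect a serious obstacle, since the out-degree-one structure of the directed cycle makes the dominator condition extremely rigid; the whole difficulty collapses once the sole-out-neighbor observation is in place. The one point deserving care is the degenerate case — verifying that a vertex cannot ``dominate itself'' through its own singleton class, which is precisely what looplessness rules out and which is exactly what prevents the coloring from merging any two vertices into a shared class. An alternative route would mirror the induction used in Proposition \ref{p1}, deleting a vertex to expose a directed path, but the direct out-degree argument is cleaner and sidesteps any bookkeeping for the wrap-around arc.
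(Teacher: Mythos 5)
Your proof is correct and is essentially the paper's argument in direct form: the paper assumes a color class of size at least two and observes that the in-neighbor $v_{i-1}$ of a repeated-color vertex $v_i$ dominates no class, while you run the same out-degree-one rigidity forward to conclude every vertex's out-neighbor must form a singleton class. Both hinge on exactly the same structural fact, so no substantive difference; your explicit handling of the loopless/self-domination point is a small clarity bonus over the paper's terser contradiction.
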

\begin{proof}
Let $C=v_{0}v_{1}\dots v_{n-1}v_{0}$ be a directed cycle of order $n$ and assume that $\mathcal{F}$ is a total dominator coloring of $C$ attaining $\chi_{d}(C)=m<n$. Then there is some color class $c_{i}\in\mathcal{F}$ such that $|c_{i}|\geq 2$. Without loss of generality, let $v_{i}$ and $v_{j}$ be two vertices with color $c_{i}$ in our coloring of $C$, and consider the arc $v_{i-1}v_{i}$ of $C$. Clearly $v_{i-1}$ does not dominate any color class of $\mathcal{F}$, a contradiction. Therefore it must be that $\chi_{d}(C)=n$.
\end{proof}

At this point, it would seem natural to determine if the existence of a Hamiltonian directed path/cycle in a digraph $D$ is a sufficient condition for the digraph to have $\chi_{d}(D)=|V(D)|$. As the following two examples demonstrate, these are certainly not sufficient conditions for a digraph $D$ to satisfy $\chi_{d}(D)=|V(D)|$.

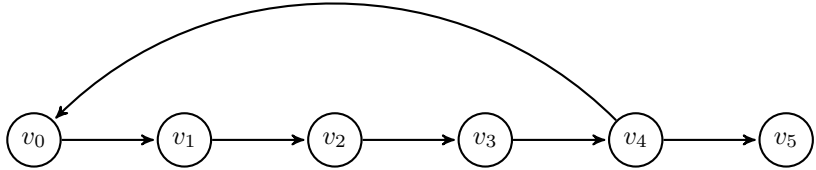
\begin{figure}[h!]
\centering
\begin{tikzpicture}[-,>=stealth',shorten >=1pt,auto,node distance=2cm,
                    thick,main node/.style={circle,draw}]
  \node[main node] (A)					{$v_{0}$};
  \node[main node] (B) [right of=A]		{$v_{1}$};
  \node[main node] (C) [right of=B] 	{$v_{2}$};
  \node[main node] (D) [right of=C] 	{$v_{3}$};
  \node[main node] (E) [right of=D]     {$v_{4}$};
  \node[main node] (F) [right of=E]     {$v_{5}$};  
  \draw[thick,->] (A) to (B);
  \draw[thick,->] (B) to (C);
  \draw[thick,->] (C) to (D);
  \draw[thick,->] (D) to (E);
  \draw[thick,->] (E) to (F);
  \draw[thick,->] (E) to [out=135,in=45] (A);
\end{tikzpicture}
\caption{An example of a digraph with a Hamiltonian directed path that has $\chi_{d}(D)<|V(D)|$. We may assign the vertices $v_{0}$ and $v_{5}$ to the same color class even though the digraph has a Hamiltonian directed path $v_{0}v_{1}v_{2}v_{3}v_{4}v_{5}$.}
\label{f3}
\end{figure}
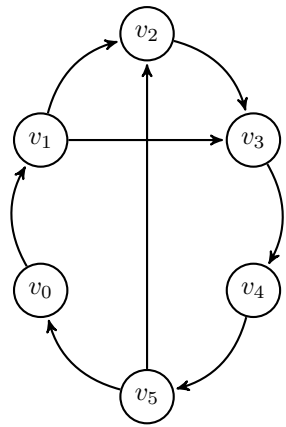
\begin{figure}[h!]
\centering
\begin{tikzpicture}[-,>=stealth',shorten >=1pt,auto,node distance=2cm,
                    thick,main node/.style={circle,draw}]
  \node[main node] (A)					      {$v_{0}$};
  \node[main node] (B) [above of=A]		      {$v_{1}$};
  \node[main node] (C) [above right of=B] 	  {$v_{2}$};
  \node[main node] (D) [below right of=C] 	  {$v_{3}$};
  \node[main node] (E) [below of=D]           {$v_{4}$};
  \node[main node] (F) [below left of=E]      {$v_{5}$};  
  \draw[thick,->] (A) to [bend left] (B);
  \draw[thick,->] (B) to [bend left] (C);
  \draw[thick,->] (C) to [bend left] (D);
  \draw[thick,->] (D) to [bend left] (E);
  \draw[thick,->] (E) to [bend left] (F);
  \draw[thick,->] (F) to [bend left] (A);
  \draw[thick,->] (F) to (C);
  \draw[thick,->] (B) to (D);
\end{tikzpicture}
\caption{An example of a digraph with a Hamiltonian directed cycle that has $\chi_{d}(D)<|V(D)|$. We may assign the vertices $v_{0}$ and $v_{2}$ to the same color class even though the digraph has a Hamiltonian directed cycle $v_{0}v_{1}v_{2}v_{3}v_{4}v_{5}v_{0}$.}
\label{f4}
\end{figure}

To conclude this section, we provide an analogous proposition for the dominator chromatic number of orientations of star graphs. A star graph is simply the complete bipartite graph $K_{1,k}$. As the following lemma will show, star graphs are perhaps as elementary of a class of graphs as possible in which the dominator chromatic number $\chi_{d}(D)$ is not invariant under orientation.
\begin{prop}\label{p3}
Let $D$ be an orientation of a star graph, $G$. Then $2\leq\chi_{d}(D)\leq3$, $\chi_{d}(D)=2$ if and only if all arcs are oriented similarly with respect to the central vertex, and $\chi_{d}(D)=3$ otherwise.
\end{prop}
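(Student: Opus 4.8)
The plan is to sandwich $\chi_{d}(D)$ between the ordinary chromatic number and an explicit small coloring, and then to locate the jump from $2$ to $3$ by a case analysis on how the leaf-arcs are oriented at the center. For the lower bound I would invoke Observation \ref{o1}: since the underlying graph is $K_{1,k}$, every orientation satisfies $\chi_{d}(D)\ge\chi(D)=\chi(K_{1,k})=2$, which already gives the left-hand inequality. Write $c$ for the central vertex and $\ell_{1},\dots,\ell_{k}$ for the leaves, and classify each leaf by the direction of its incident arc: call $\ell_{i}$ an \emph{in-leaf} when the arc is $\ell_{i}c$ and an \emph{out-leaf} when the arc is $c\,\ell_{i}$. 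The phrase ``all arcs oriented similarly'' then corresponds exactly to the two uniform cases, every leaf an in-leaf or every leaf an out-leaf.

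First I would handle the uniform orientation in which the center is a universal sink, that is, every $\ell_{i}$ is an in-leaf. Here I assign color $1$ to $c$ and color $2$ to all leaves; this is proper because the leaves form an independent set. To see it is a dominator coloring, observe that the unique out-neighbor of each leaf is $c$, and the class of color $1$ is the singleton $\{c\}$, so each leaf dominates $\{c\}$, while the center, being a sink, dominates its own singleton class. Hence $\chi_{d}(D)=2$ in this case, and the opposite uniform orientation is treated by the arc-reversed analogue of this argument. Conversely, suppose the orientation is mixed, so that there is at least one out-leaf $\ell_{j}$. In any proper two-coloring the center occupies one color class alone and every leaf must take the other color; but an out-leaf is a sink, so the only class it could possibly dominate is a singleton equal to itself, which is impossible once it shares its color with another leaf. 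This yields $\chi_{d}(D)\ge 3$ for a mixed orientation and supplies the ``only if'' direction of the characterization.

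It remains to exhibit, for a mixed orientation, a dominator coloring using only three colors, and this is the step I expect to be the main obstacle. The natural attempt keeps $c$ on color $1$, places all in-leaves on color $2$, and places all out-leaves on color $3$; properness is immediate from the independence of the leaves. The delicate point is the domination condition for the out-leaves: each such leaf is a sink and reaches no vertex along an arc, so it can only dominate a singleton class consisting of itself—precisely the constraint that already obstructed the two-coloring. The crux of the upper bound is therefore to verify each out-leaf directly against the definition and to confirm that the sink behavior is compatible with a single shared third class, the in-leaves being handled uniformly through the singleton $\{c\}$ and the center through the out-leaf class it dominates. Controlling this sink case cleanly is where the argument demands the most care; once it is in hand, combining the lower bound, the uniform two-coloring, the impossibility of two colors under a mixed orientation, and this three-coloring delivers $2\le\chi_{d}(D)\le 3$, with equality to $2$ exactly in the uniform case.
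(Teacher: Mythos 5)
Your proof is incomplete at exactly the point you flag, and the way you handle sinks makes the two halves of your argument mutually inconsistent. In the all-in uniform case you let the center, a sink, dominate its own singleton class; in the mixed-case lower bound you argue that an out-leaf (also a sink) sharing a color can dominate nothing, hence $\chi_{d}\geq3$. But if sinks genuinely carry a domination obligation, then your own three-coloring of the mixed case fails --- two out-leaves share color $3$ and, by your own reasoning, each dominates nothing --- and the ``arc-reversed analogue'' for the all-out uniform case fails as well: reversal swaps which vertices are sinks, so in the all-out orientation the sinks are the $k$ leaves sharing one non-singleton class, the situation is not symmetric to the all-in case, and $\chi_{d}=2$ would be false in that direction under your reading. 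There is no single reading of the definition under which all of your steps go through; you correctly identify the out-leaf domination as ``the main obstacle'' and then leave it unresolved.

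The resolution, implicit throughout the paper (compare Proposition \ref{p1}, where the terminal sink of a directed path is uniquely colored yet has no out-neighbors to dominate), is that vertices of out-degree zero impose no domination constraint. Under that convention the paper's proof is short: in the mixed case color $v$ uniquely and give one color each to $N^{+}(v)$ and $N^{-}(v)$; then $v$ dominates the class $N^{+}(v)$, each in-leaf dominates the singleton class $\{v\}$, and the out-leaves, being sinks, need nothing --- this settles your obstacle. For minimality, the correct obstruction to a two-coloring is the center, not the out-leaf: in any proper two-coloring $v$ is alone in its class and all leaves share the other class; since the orientation is mixed, some leaf is an in-leaf, so the leaf class is not contained in $N^{+}(v)$ and $v$ dominates no class, forcing a third color. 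This is precisely the paper's argument (``else $v$ does not dominate any color class''). Your lower bound of $2$ via Observation \ref{o1} and your all-in two-coloring are fine, but the proof cannot be completed as written without committing to the sink convention and rerouting the mixed-case lower bound through the center.
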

\begin{proof}
First assume that all arcs of $D$ are oriented similarly with respect to the central vertex, call it $v$. Clearly we may color $D$ by assigning a unique color to $v$ and a common color (distinct from the color $c(v)$) to $V(D)\setminus\{v\}$.

Next, assume that all arcs of $D$ do not share a similar orientation with respect to $v$. Again color $v$ with a unique color $c(v)$. Then assign one color to each of $N^{+}(v)$ and $N^{-}(v)$. This establishes a dominator coloring of $D$ using three colors, so in order to complete the entire proof, it remains to be shown that this coloring uses the fewest possible colors for this orientation. Clearly we must have distinct colors for $N^{+}(v)$ and $N^{-}(v)$, else $v$ does not dominate any color class. Similarly we have that $v$ and each of $N^{-}(v)$ and $N^{+}(v)$ must not share any colors. Thus, under such an orientation, $\chi_{d}(D)=3$ and the entire proof is complete.
\end{proof}

Thus we see that the dominator chromatic number of a star graph directly indicates the uniformity (or lack thereof) in arc orientation. Equivalently, for an orientation $D$ of a star graph, $\chi_{d}(D)=2$ if and only if either $D$ or $D^{-}$, the digraph obtained by reversing the orientation of every arc of $D$, results in an arborescence. Clearly the dominator chromatic number of a digraph can be directly indicative of crucial structural properties of the digraph. Characterizing to what extent this is the case would prove an insightful contribution.

\section{Orientations of Paths}\label{s3}
In the previous section we determined that directed paths, directed cycles, and tournaments all obtain a largest possible dominator chromatic number. Given this, at least in the cases of paths and cycles, we might ask whether or not this particular orientation of a path or cycle is unique in maximizing the dominator chromatic number of the digraph. We begin by studying orientations of paths, then proceed to orientations of cycles, and then further generalize orientations of paths by looking at specific types of orientations of trees, chiefly orientations of stars.

In an arbitrary orientation of a path, the vertices may have out degree between zero and two, with there being precisely one more out degree zero vertex than out degree two vertex (due to the degree sum formula). We begin by proving the minimum dominator chromatic number over all possible orientations of paths. Before doing this, however, we present a lemma about dominator coloring of sub-paths of paths dealing with containment.
\begin{lemma}\label{l2}
Let $m,n\in\mathbb{N}$ such that $m<n$ and let $P_{m}$ and $P_{n}$ be orientations of paths of length $m$ and $n$, respectively. If $P_{m}\subset P_{n}$ then $\chi_{d}(P_{m})\leq\chi_{d}(P_{n})$.
\end{lemma}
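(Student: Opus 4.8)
The plan is to reduce the statement to a single-vertex operation and then argue by induction. First I would observe that since the underlying graph of $P_{m}$ is a connected subgraph of the path underlying $P_{n}$, the digraph $P_{m}$ must be a contiguous segment of $P_{n}$ carrying the inherited orientation. Consequently $P_{m}$ can be obtained from $P_{n}$ by deleting one endpoint vertex at a time. It therefore suffices to prove the single claim that if $P^{\prime}$ is obtained from an orientation $P$ of a path by deleting one of its two endpoint vertices, then $\chi_{d}(P^{\prime})\leq\chi_{d}(P)$; chaining these inequalities along the deletions from both ends of $P_{n}$ down to $P_{m}$ then yields the lemma.

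To prove the one-step claim, I would start from a minimum dominator coloring $\mathcal{C}$ of $P$ using $\chi_{d}(P)$ colors and restrict it to $P^{\prime}$. Properness is inherited automatically, so the only thing to verify is the dominator condition at each surviving vertex. Here I would exploit the fact that in any orientation of a path every vertex has out-degree at most two, so every color class dominated by a vertex $v$ is either the singleton $\{v\}$ itself or a subset of the at most two out-neighbors of $v$. This strong locality means that deleting an endpoint $w$ can only spoil the domination of the unique neighbor $u$ of $w$, and only when the class dominated by $u$ actually contained $w$; every other vertex keeps dominating exactly the class it dominated before. Splitting on the orientation of the edge at $w$, if $w$ is a source then $w$ lies in no vertex's out-neighborhood, so no dominated class contained $w$ and the restriction is already a valid dominator coloring of $P^{\prime}$.

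The main obstacle is the remaining case, where $w$ is a sink and the neighbor $u$ dominated a class containing $w$. I would carry out a short case analysis on the possible dominated classes of $u$ (which are contained in the out-neighborhood of $u$, namely $w$ together with the other neighbor of $u$) and show that the only genuinely problematic configuration is when the sole class dominated by $u$ is the singleton $\{w\}$: in the two-element case the class merely shrinks to the remaining out-neighbor, which $u$ still dominates. In the problematic configuration $w$ must occupy a color class by itself, so deleting $w$ frees exactly one color. I would then recolor $u$ with this freed color, making $u$ its own singleton class so that it dominates itself, and verify that this does not raise the total number of colors and does not disturb any other vertex: the class that $u$ vacates only shrinks, and domination of a nonempty subclass is preserved. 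Establishing that the failure forces $w$ to be a singleton, so that a color is always available for the repair, is the crux of the argument and the step I expect to require the most care.
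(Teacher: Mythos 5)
Your reduction to a single endpoint deletion, and the restrict-then-repair strategy, are sound --- and considerably more detailed than the paper's own proof, which is a single unproven sentence --- but the crux step, exactly the one you flagged, fails under this paper's definition of domination. Digraphs here are loopless, and a vertex $v$ dominates a class $C$ only if the arc $vu$ exists for \emph{every} $u\in C$; a vertex therefore never dominates its own color class, singleton or otherwise. This is not a pedantic reading: if a uniquely colored vertex were deemed to dominate itself, Propositions \ref{p1} and \ref{p2} would be false. Indeed, in the directed path $v_{0}v_{1}\dots v_{n-1}$, color every vertex uniquely except $c(v_{0})=c(v_{2})$; then $v_{1}$ would ``self-dominate'' its own singleton class, every other vertex dominates the singleton class of the vertex ahead of it (or is a sink), and we would have a dominator coloring on $n-1$ colors, contradicting $\chi_{d}=n$. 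So in your problematic configuration --- $u$ with $N^{+}(u)=\{w,x\}$ in $P$, the sole class dominated by $u$ being the singleton $\{w\}$ --- recoloring $u$ with the freed color does not restore the dominator condition at $u$: in $P^{\prime}$ the vertex $u$ still has out-degree one and needs an entire color class contained in $\{x\}$.

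The repair is salvageable by redirecting it: assign the freed color $c(w)$ to $x$, the surviving out-neighbor of $u$, rather than to $u$. Since $w$ was uniquely colored in $P$, the color $c(w)$ occurs nowhere in $P^{\prime}$, so properness at $u$ and at the far neighbor of $x$ is automatic; $\{x\}$ is now a singleton class dominated by $u$; the class $x$ vacates had at least two members (that was precisely the hypothesis making the configuration problematic), so it stays nonempty and any vertex that dominated it still does, since classes only shrink; and the color count does not increase. One smaller omission: your case analysis tacitly assumes $u$ has out-degree two in $P$ (``$w$ together with the other neighbor of $u$''). When $uw$ is the only out-arc of $u$, deleting $w$ turns $u$ into a sink of $P^{\prime}$; under the convention the paper actually operates with --- vertices of out-degree zero are exempt from the domination requirement, as is forced by its colorings of oriented paths in Theorem \ref{t1}, where all out-degree-zero vertices may share colors --- this case is harmless, but it must be said, and note that your opening premise that every vertex dominates ``the singleton $\{v\}$ itself'' or a subset of its out-neighbors is the same inadmissible self-domination assumption in disguise.
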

\begin{proof}
It is obvious that no color classes of $P_{m}$ may be combined if $P_{m}$ is embedded into a larger path when $P_{m}$ has been given a proper dominator coloring of the fewest possible colors, given its orientation.
\end{proof}

\begin{theorem}\label{t1}
The minimum dominator chromatic number over all orientations of the path $P_{n}$ is given by
\begin{equation*}
\chi_{d}(P_{n})=\begin{cases}
k+2 & \mathrm{if}\ n=4k\\
k+2 & \mathrm{if}\ n=4k+1\\
k+3 & \mathrm{if}\ n=4k+2\\
k+3 & \mathrm{if}\ n=4k+3
\end{cases}
\end{equation*}
for $k\geq 1$ with the exception $\chi_{d}(P_{6})=3$.
\end{theorem}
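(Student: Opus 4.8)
The plan is to prove the upper and lower bounds separately, organizing everything around the residue of $n$ modulo four, since the stated values are periodic with period four once the small sporadic cases are set aside. For the upper bound I would proceed by induction in steps of four. The natural building block is the length-four gadget with out-degree sequence $\{0,2,0,1\}$ that Lemma~\ref{l1} already exhibits on $P_4$: a vertex of out-degree two placed between two sinks dominates one sink's singleton class, while the trailing out-degree-one vertex dominates the other sink. Given an optimal orientation and dominator coloring of $P_{n-4}$, I would append such a block to one end, choosing its orientation so that its interior (non-sink) vertices reuse colors already present — in particular grouping mutually non-adjacent source vertices into a single recycled color class — so that the block contributes exactly one genuinely new color. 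The four residues $n=4k,4k+1,4k+2,4k+3$ are then distinguished by a short boundary segment of length $0,1,2,3$ whose orientation is chosen either to reuse the recycled palette (yielding the $+2$ offset) or to force one additional color at the terminus of the path (yielding the $+3$ offset).

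For the lower bound I would argue by strong induction on $n$, again in steps of four, with base cases $P_4,\dots,P_7$ checked by hand and $P_6$ treated as a separate exhaustive case. The inductive engine is Lemma~\ref{l2}: from an optimal orientation of $P_n$, deleting a terminal sub-path of length four leaves some orientation of $P_{n-4}$, and the containment forces at least $\chi_d(P_{n-4})$ colors to survive. The structural fact to isolate first is that in any orientation of a path a vertex dominates a color class only through its out-neighbors, of which there are at most two; hence every \emph{dominated} color class has size at most two, and a dominated class of size two must consist of the two neighbors of a common out-degree-two vertex. Charging each vertex to a class that satisfies it, and bounding how many vertices the available small dominated classes and self-dominating singletons can collectively serve, should produce the matching lower bound on the number of colors.

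The hard part will be the lower bound, and specifically showing that the period-four increment is exactly one and never zero — that is, ruling out orientations that appear to amortize a single new color over five or more consecutive vertices. This is where the proper-coloring constraint (color classes are independent sets, hence of size at most $\lceil n/2\rceil$) and the domination constraint (only size-$\le 2$ classes are dominatable, and each such class satisfies only its few in-neighbors or its unique common out-neighbor) must be balanced against one another through the charging argument. A second delicate point is that when restricting a coloring of $P_n$ to a sub-path, domination may have relied on the deleted vertices, so the restriction need not itself be a valid dominator coloring; the argument must account for the few affected boundary vertices rather than simply invoking monotonicity. The sporadic exception $\chi_d(P_6)=3$ is exactly the smallest place where an orientation beats the generic bound, so I would confirm it, and verify that no analogous saving recurs for any $n\ge 7$, by a finite analysis over the possible out-degree sequences. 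Matching the upper and lower bounds across all four residues, together with the isolated value at $n=6$, then completes the proof.
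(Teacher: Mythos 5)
There is a genuine gap in the upper-bound half, and the proposal's choice of gadget is the culprit. The block with out-degree sequence $\{0,2,0,1\}$ from Lemma~\ref{l1} cannot serve as the repeating unit: since the out-degrees of an orientation of $P_n$ must sum to $n-1$, a tiling block must contribute out-degree $4$ per four vertices, while $\{0,2,0,1\}$ sums to $3$, so the concatenated sequence $\{0,2,0,1,0,2,0,1,\dots\}$ is not even realizable. Worse, when you append this block to an end of an optimally colored $P_{n-4}$, the connecting edge forces either the old endpoint or the block's first vertex to acquire out-degree one, and any out-degree-one vertex forces its out-neighbor to be \emph{uniquely} colored; together with the unique color already forced at the block's interior sink by the trailing out-degree-one vertex, the block costs two new colors per four vertices, overshooting the claimed bound by one per period. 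The block that genuinely adds one color per four vertices is the alternating source--sink pattern $\{2,0,2,0\}$: its two sources join the recycled source class, one sink joins the recycled sink class, and exactly one new sink is uniquely colored (it is dominated by both adjacent sources). This is what the paper does: its optimal orientations are the fully alternating $\{0,2,0,2,\dots,2,0\}$ for $n\equiv 1\pmod 4$ (Claims 3 and 5), with out-degree-one vertices tolerated only at the boundary for the other residues (Claims 6--8). Indeed the paper's Claim 2 proves, by an arc-reversal recoloring, that interior out-degree-one vertices are strictly wasteful in any minimum dominator coloring --- the exact opposite of building the induction around them.

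The lower-bound half has the complementary gap, which you correctly identify but do not close. Deleting a terminal length-four segment and invoking Lemma~\ref{l2} yields only $\chi_{d}(P_{n})\geq\chi_{d}(P_{n-4})$, and the entire content of the theorem is the strict increment; saying the charging argument ``should produce'' it is a statement of the difficulty, not a mechanism, and your proposed ``finite analysis over the possible out-degree sequences'' for all $n\geq 7$ is not finite. The paper avoids this by first restricting the orientation itself: Claim 2 (no two consecutive out-degree-one vertices) and Claim 3 (none at all when $n=4k+1$, $k>2$) pin down the anchor structure, after which your counting observation --- dominated classes have size at most two, a size-two dominated class is the out-neighborhood of a single source, hence at most two sources share any dominated class --- is applied exactly once, at the residue $n\equiv 1\pmod 4$ (Claim 4: $2k$ sources force at least $k$ dominated classes, plus the source class and the leftover sink class, giving $k+2$). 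The remaining residues are settled by explicit one- and two-vertex extension and deletion arguments (Claims 6--8), each handling the boundary-domination breakage you flag, rather than by a general restriction-plus-charging scheme. Finally, your ``self-dominating singletons'' have no counterpart under the paper's convention: a dominated class must lie inside a vertex's out-neighborhood, so no vertex dominates its own class, and sinks are de facto exempt from the domination requirement; importing the undirected self-domination convention would corrupt the counts (it would force every non-uniquely-colored sink to be recolored, inflating the alleged lower bound past the true values).
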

\begin{proof}
Our proof consists of proving a series of claims which, collectively, prove the theorem. For clarity, the completion of the proof of each claim is denoted by a diamond symbol, i.e., by $\Diamond$. Additionally, notice that the dominator chromatic number of $P_{n}$ for $n<4$ is given by
\begin{equation*}
\chi_{d}(P_{n})=\begin{cases}
1 & \mathrm{if}\ n=1\\
2 & \mathrm{if}\ n=2\\
2 & \mathrm{if}\ n=3
\end{cases}
\end{equation*}
We begin this proof with an obvious but important claim.

\textbf{Claim 1.} Let $S=\{v\in V(P_{n})|d^{+}(v)=2\}$. Then $S$ is colored completely by one color class in any minimum dominator coloring of $P_{n}$.

\textbf{Proof.} No vertex in a path can dominate a vertex of out-degree two. $\Diamond$

Next, we show that a particular structure yields a minimum dominator coloring over all orientations of the path $P_{n}$ for $n=4k+1$ where $n$ is a positive integer. Unless otherwise specified, we assume that $k\geq1$.

\textbf{Claim 2.} Let $P_{n}=v_{0}v_{1}\dots v_{n-1}$ be an orientation of a path on $n$ vertices and let $\mathcal{C}$ be a dominator coloring of $P_{n}$ using fewest possible colors. Then we have that $\nexists\ v_{i},v_{i+1}\in V(P_{n})\ \mathrm{s.t.}\ d^{+}(v_{i})=d^{+}(v_{i+1})=1$.

\textbf{Proof.} Define $P_{n}=v_{0}v_{1}\dots v_{n-1}$. There are two cases to consider; either $d^{+}(v_{0})=d^{+}(v_{1})=1$ or there exists a subsequence of $P_{n}$, say $v_{i-1},v_{i},v_{i+1}$ with out-degree sequence $\{2,1,1\}$. 

In the first case, we have that the vertex $v_{0}$ must be colored with the same color as the vertices of out-degree two, i.e., the color assigned to all vertices not dominated by any other vertex, else $\mathcal{C}$ is not minimal. However, since $d^{+}(v_{0})=1$, the color assigned to $v_{1}$ must be unique. Similarly, since $d^{+}(v_{1})=1$, the color assigned to $v_{2}$ must be unique. This constitutes three color classes used on the vertices $v_{0}$, $v_{1}$, and $v_{2}$, with two of these colors being unique. Re orient the arc $v_{0}v_{1}$ so that it becomes $v_{1}v_{0}$. If we leave the color assigned to $v_{2}$, recolor $v_{0}$ with the same color as $v_{2}$, and color $v_{1}$ with the color assigned to all non-dominated vertices, we recolor $P_{n}$ with fewer colors, contradicting that $\mathcal{C}$ was a minimum dominator coloring.

For the second case, consider the vertex sequence $v_{i-1},v_{i},v_{i+1}$ with out-degree sequence $\{2,1,1\}$. By Claim 1, we have that $v_{i-1}$ is colored with the same color assigned to all non-dominated vertices in $P_{n}$, call it $c_{2}$. By extending the vertex sequence by one vertex in each direction, we see that we have the subpath $v_{i-2}v_{i-1}v_{i}v_{i+1}v_{i+2}$ assigned the colors $c_{x},c_{2},c_{y},u_{1},u_{2}$ where it may be the case that $c_{x}=c_{y}$ and the colors $u_{i}$ are unique, i.e., they are only assigned to one vertex in all of $P_{n}$. By reversing the orientation of the arc $v_+{i}v_{i+1}$, we get that $d^{+}(v_{i})=0$ and $d^{+}(v_{i+1})=2$. We may recolor this subpath with the colors $c_{x},c_{2},c_{y},c_{2},u_{2}$ to achieve a proper dominator coloring of $P_{n}$ with fewer colors than $\mathcal{C}$, contradicting that $\mathcal{C}$ was a minimum dominator coloring of $P_{n}$. Therefore we may conclude that in a minimum dominator coloring over all orientations of the path $P_{n}$, there are no consecutive vertices with out-degree one. $\Diamond$

As a consequence, we have that a valid out-degree sequence for an orientation of a path on $n=4k+1$ vertices, for any $k\in\mathbb{N}$, is $\{0,2,0,2,0,\dots,2,0,2,0\}$. The next claim shows that this is necessarily the optimal structure.

\textbf{Claim 3.} For $k>2$ there are no vertices with out-degree equal to one in an orientation of $P_{4k+1}$ which admits a minimum dominator coloring.

\textbf{Proof.} Let $P_{4k+1}$ be an orientation with vertices of out-degree one. Per Claim 2, no two consecutive vertices of out-degree one exist. Notice that there must be an even number of vertices with out-degree one, as each vertex of out-degree one is between a vertex of out-degree two and a vertex of out-degree zero, and the removal of all vertices of out-degree one would leave a path whose out-degree sequence is $\{0,2,0\dots,0,2,0\}$ which necessarily has an odd number of members. Thus we may conclude that there are at least two vertices with out-degree one. Notice next that each vertex dominated by a vertex of out-degree one must be uniquely colored. Thus, if we remove all vertices of out-degree one from $P_{4k+1}$, the resulting path has a proper dominator coloring. Consider the vertices of out-degree two which dominated vertices of out-degree one that were not uniquely colored in this minimal dominator coloring of $P_{4k+1}$. The vertex of out-degree zero that was dominated by the vertex of out-degree two in the original path was either uniquely colored, in which case the vertex of out-degree two now dominates two color classes, or the vertex of out-degree zero shared a color with the vertex of out-degree one that we removed. In either case, the vertex of out-degree two dominates two color classes in the resultant path. Let $P^{\prime}$ be the subpath of $P_{4k+1}$ obtained by removing all vertices of out-degree one, and let $\mathcal{C}$ and $\mathcal{C}^{\prime}$ be their respective minimal dominator colorings. Let there have been $2m$ vertices of out-degree one in $P_{4k+1}$ (recall that there are necessarily an even number of such vertices). By coloring the neighbors of each vertex of out-degree two that originally dominated a vertex of out-degree one with the same color, we have that $\mathcal{C}^{\prime}$ uses $2m-1$ fewer colors than $\mathcal{C}$. We can easily append the $2m$ removed vertices to either end of $P^{\prime}$ such that the out-degree sequence of the new path on $4k+1$ vertices is $\{0,2,0,\dots,0,2,0\}$ using fewer than $2m-1$ new colors simply by coloring all new vertices of out-degree two with the same color used to color all existing out-degree two vertices.

All that remains to be shown, then, is the case where $m=1$, i.e., when there were exactly two vertices of out-degree one in $P_{4k+1}$. One can construct examples which obtain a minimum dominator coloring using the out-degree sequence $\{1,0,2,0,1\}$ and $\{1,0,2,0,2,0,2,0,1\}$ for $P_{5}$ and $P_{9}$, respectively. However, when $k>2$, if we assume the same out-degree sequence, we can remove the outermost two vertices from each end ($\{v_{1},v_{2},v_{4k},v_{4k+1}$) and obtain a smaller path with the same out-degree sequence which, presumably, also admits a minimum dominator coloring. However, if this is the case, it is easy to see that to extend this dominator coloring to the original path $P_{4k+1}$, we need two more colors. If this was not the case, then $P_{4(k-1)+1}$ would not have been given a minimum dominator coloring as we had assumed. The extension from $P_{5}$ to $P_{9}$ exists only because the vertex $v_{5}$ dominates all out-degree zero vertices in $P_{9}$ not dominated by a vertex of out-degree one. This is not possible for $k>2$. 

Therefore we conclude that any minimum dominator coloring of the paths $P_{4k+1}$ has no vertices with out-degree one.$\Diamond$

Thus we have that the unique out-degree sequence of the path $P_{4k+1}$ which admits a minimum dominator coloring over all orientations of $P_{4k+1}$ for $k>2$.

\textbf{Claim 4.} Let $n=4k+1$ for some $k\in\mathbb{N}$. Then $\chi_{d}(P_{n})\geq k+2$.

\textbf{Proof.} From Claim 2 we know that the out-degree sequence of $P_{n}$ is precisely $\{0,2,0,\dots,0,2,0\}$. We must color all vertices with out-degree two with the same color. Moreover, each vertex with out-degree two must dominate some color class. Since there are $2k$ vertices with out degree two, we must introduce at least $\frac{2k}{2}=k$ more color classes; we may reduce this from $2k$ to $k$ on the basis that it is possible to have as many as two vertices with out-degree two dominate the same color class, but never more than two vertices may dominate the same color class in an orientation of a path. Then, if the remaining vertices are all colored with the same, new color (these vertices are dominated), we have used $k+2$ colors in total. $\Diamond$

\textbf{Claim 5.} $\chi_{d}(P_{4k+1})=k+2$.

\textbf{Proof.} Orient $P_{4k+1}$ such that the out-degree sequence is given by the sequence $\{0,2,0,\dots,0,2,0\}$. From Claim 3, it suffices to show that this orientation of $P_{n}$ for $n=4k+1$ uses only $k+2$ colors to give a proper dominator coloring of $P_{n}$. Let $P_{n}=v_{1}v_{2}\dots v_{4k+1}$. Color $P_{n}$ with the function $c$ given below.
\begin{equation*}
c(v_{i})=\begin{cases}
c_{1} & \mathrm{if}\ i\equiv\ 0\ (\mathrm{mod}\ 4)\\
c_{2} & \mathrm{if}\ i\equiv\ 1\ (\mathrm{mod}\ 2)\\
\mathrm{unique} & \mathrm{otherwise}
\end{cases}
\end{equation*}
The coloring function $c$ is a proper dominator coloring of $P_{n}$ which uses only $k+2$ colors, thereby completing the proof of this claim. $\Diamond$

Now that we have a parameterized value for the minimum dominator chromatic number over all orientations of all paths of length $4k+1$, we can use this to establish parameterized values for all other possible path lengths, thereby completing the proof of this theorem. Next we establish a relationship between $\chi_{d}(P_{4k+1})$ and $\chi_{d}(P_{4k+2})$.

\textbf{Claim 6.} $\chi_{d}(P_{4k+2})>\chi_{d}(P_{4k+1})$ for $k\geq 2$.

\textbf{Proof.} First, notice the exception in the statement of the theorem which states that $\chi_{d}(P_{6})=3$. If we let $P_{6}=v_{1}\dots v_{6}$ have out-degree sequence $\{1,0,2,0,2,0\}$, we may color vertices $v_{1}$, $v_{3}$, and $v_{5}$ with the same color since none of these vertices are dominated. The vertex $v_{2}$ mus be colored uniquely, as it is dominated by a vertex of out-degree one, but we may color $v_{4}$ and $v_{6}$ with the same color and obtain a proper dominator coloring of $P_{6}$ on $3$ colors. 

Since $P_{4k+1}\subset P_{4k+2}$, it follows that $\chi_{d}(P_{4k+1})\leq\chi_{d}(P_{4k+2})$. Moreover, any orientation of $P_{4k+2}$ which admits a minimum dominator coloring cannot have more than one vertex of out-degree one. To see this, choose any vertex of out-degree one and remove it, creating a path of length $4k+1$. If this path has any vertices of out-degree one, it does not admit a minimum dominator coloring over all paths of length $4k+1$. We obviously cannot use fewer colors by inserting the original vertex back into our path and re-obtaining our original path of length $4k+2$, so the original path of length $4k+2$ cannot have a dominator coloring on $\chi_{d}(P_{4k+1})$ colors.

If the out-degree sequence of $P_{4k+2}=v_{1}v_{2}\dots v_{4k+2}$ is given by the sequence $\{1,0,2,0,\dots,0,2,0\}$, the sub-path $v_{2}v_{3}\dots v_{4k+2}$ has length $4k+1$ and thus admits a unique coloring scheme. This dominator coloring cannot be extended to $P_{4k+2}$ since $v_{2}$ is not uniquely colored, therefore we may assume that the out-degree sequence of $P_{4k+2}$ is either $\{0,2,0,\dots 0,1,2,0,\dots 0,2,0\}$ or $\{0,2,0,\dots 0,2,1,0,\dots 0,2,0\}$. The proof for these two is similar, so, for concision, we only present the first case. Let $v_{i}$ be the vertex of out-degree one. Then $v_{i-1}$ and at least one of $\{v_{i},v_{i+2}\}$ must be uniquely colored. If $v_{i+2}$ is not uniquely colored, then we may remove $v_{i}$ to create a path of length $4k+1$ which has a proper dominator coloring. If this path uses fewest possible colors, we still obtain that $\chi_{d}(P_{4k+2})>\chi_{d}(P_{4k+1})$ since $v_{i}$ is uniquely colored. Now, if $v_{i+2}$ is uniquely colored, then $v_{i}$ may not be uniquely colored. However, by removing $v_{i}$, we obtain an orientation of $P_{4k+1}$ that does not admit a minimum dominator coloring (both out-neighbors of $v_{i+1}$ are uniquely colored), and so our dominator coloring of $P_{4k+2}$ uses strictly more than $\chi_{d}(P_{4k+1})$ colors.

Thus we may conclude that, for $k\geq2$, $\chi_{d}(P_{4K+2})=\chi_{d}(P_{4k+1})+1=k+3$, thus completing the proof of this claim. $\Diamond$

\textbf{Claim 7.} $\chi_{d}(P_{4k+3})=k+3$.

\textbf{Proof.} Since $\chi_{d}(P_{4k+1})=k+2$ and $\chi_{d}(P_{4k+2})=k+3$, it suffices to show that we can extend a minimum dominator coloring of $P_{4k+1}$ to $P_{4k+3}$ by adding only one new color. Let $P_{4k+1}=v_{1}v_{2}\dots v_{4k+1}$ be a subset of $P_{4k+3}=P_{4k+1}v_{4k+2}v_{4k+3}$, let the out-degree sequence of $P_{4k+3}=\{0,2,0,\dots,0,2,0\}$, and let $\mathcal{C}$ be a minimum dominator coloring of $P_{4k+1}$ with the structure provided in Claim 4. We complete the proof of this claim by coloring $v_{4k+2}$ with the color assigned to the non-dominated vertices of $P_{4k+1}$ and by assigning a new color to $v_{4k+3}$. $\Diamond$

\textbf{Claim 8.} $\chi_{d}(P_{4k+4})=\chi_{d}(P_{4k+3})$.

\textbf{Proof.} First, notice that this claim is equivalent to proving that $\chi_{d}(P_{4k})=\chi_{d}(P_{4k+1})$ since $P_{4k+4}$ is just $P_{4(k+1)}$ and the theorem claims that $\chi_{d}(P_{4(k+1)})=\chi_{d}(P_{4k+3})=\chi_{d}(P_{4k+1})+1$.

Our previous claim gave us a specific orientation and well-defined minimum possible dominator coloring for $P_{4k+3}$. Using this orientation and coloring, call it $\mathcal{C}$, of $P_{4k+3}$, we add a new vertex, $v_{4k+4}$, which dominates $v_{4k+3}$. Since $v_{4k+3}$ was the lone member of its color class in $\mathcal{C}$, $v_{4k+4}$ dominates a color class. To ensure that the dominator coloring of $P_{4k+4}$ is minimum and proper and thereby complete the proof of this claim, we assign the vertex $v_{4k+4}$ to the same color class that contains all out-degree two vertices. $\Diamond$

This completes the proof of the theorem.
\end{proof}

Thus we have not only bounded in both directions the dominator chromatic number of all possible orientations of a path, but also characterized specific orientations which attain these bounds. 

We conclude this section by showing a very interesting result relating the dominator chromatic number of digraphs to digraph parameters. To motivate this result, recall that the chromatic number of graphs satisfies the relation $\lim\limits_{n\to\infty}\frac{\chi(G)}{\Delta(G)}\leq\lim\limits_{n\to\infty}\frac{\Delta(G)+1}{\Delta(G)}=1$.

However, the dominator chromatic number does not obey this limit in general. Using the results we just obtained on the minimum dominator chromatic number of orientations of paths, we can actually obtain that $\lim\limits_{n\to\infty}\frac{\chi_{d}(D)}{\Delta(D)}$ actually can go to infinity in certain cases, such as that of orientations of paths. As we will see later, if we choose a different family of graphs, such as tournaments, the original limit on the chromatic number of graphs and this limit on the dominator chromatic number of digraphs are equal. Phrased slightly differently, this becomes $\limsup\limits_{n\to\infty}\frac{\chi_{d}(D)}{\Delta(D)}\to\infty$.

An interesting problem, one which will be recounted in the Conclusion, would be to describe families of digraphs which have a finite $\limsup\limits_{n\to\infty}\frac{\chi_{d}(D)}{\Delta(D)}=r$ for $r\in\mathbb{R}$.

\section{Orientations of Cycles}\label{s4}
We next turn our attention to orientations of cycles. To do this, we first prove a lemma that considers the problem of embedding paths into cycles.
\begin{lemma}\label{l3}
$\chi_{d}(P_{m})\leq\chi_{d}(C_{m})$ for $m\neq4$.
\end{lemma}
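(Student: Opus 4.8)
The plan is to exploit that both $\chi_{d}(P_{m})$ and $\chi_{d}(C_{m})$ are minima taken over \emph{all} orientations, and to argue by deleting a single arc. Concretely, I would fix an orientation $D$ of $C_{m}$ together with a minimum dominator coloring $\mathcal{C}$ realizing $\chi_{d}(C_{m})$, and then produce from it an orientation of $P_{m}$ carrying a dominator coloring that uses no more than $|\mathcal{C}|$ colors; since $\chi_{d}(P_{m})$ is the minimum over orientations of $P_{m}$, this immediately gives $\chi_{d}(P_{m})\le\chi_{d}(C_{m})$. The two facts driving the argument are that deleting an arc changes only the out-neighborhood of its tail (so every other vertex retains whatever color class it dominated), and that a sink trivially meets the domination requirement. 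The latter is the convention forced by the coloring of $C_{4}$ in Lemma \ref{l1}, where the two out-degree-zero vertices are not uniquely colored yet the coloring is valid; thus creating a sink by arc deletion is ``free.''

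First I would dispose of the easy case. If $D$ has a vertex $u$ with $d^{+}(u)=1$, delete its unique out-arc. Then $u$ becomes a sink and is automatically satisfied, properness is preserved (we only removed an adjacency), and every other vertex retains its out-neighborhood and hence the color class it dominated. The inherited coloring is therefore a dominator coloring of the resulting orientation of $P_{m}$ on $|\mathcal{C}|$ colors. Since in any orientation of a cycle the number of sources ($d^{+}=2$) equals the number of sinks ($d^{+}=0$), an orientation with \emph{no} out-degree-one vertex cannot have two adjacent sources or two adjacent sinks, so its sources and sinks alternate around the cycle, forcing $m$ even. In particular the easy case always applies when $m$ is odd (covering $m=3,5$) and whenever the chosen orientation is not the strict source/sink alternation.

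The substance is the remaining case, where $D$ is the alternating orientation, so $m=2\ell$ with $\ell$ sinks and $\ell$ sources, and (as $m\neq4$) $\ell\ge3$. Here each sink is free, while each source $s$ has $N^{+}(s)$ equal to its two neighboring sinks, so $s$ must dominate a color class contained in that two-element set. The key sub-claim I would prove is that \emph{every} dominator coloring of this orientation contains a uniquely colored sink. Suppose not; then for each source the only way to dominate is for its two sink-neighbors to share a color forming a complete class $\{t,t'\}$. But consecutive sources share a sink, so these two-element classes would have to overlap in that shared sink while each being a complete class, which is impossible once there are at least three sinks. (When $\ell=2$, i.e.\ $m=4$, the two sinks form the common out-neighborhood of \emph{both} sources, so one two-element class serves every source and no unique sink is needed; this is precisely the mechanism behind the exclusion $m\neq4$.) Granting a uniquely colored sink $t''$, I would pick a source $s$ with $s\to t''$, let $t\neq t''$ be its other out-neighbor, and delete the arc $(s,t)$: now $N^{+}(s)=\{t''\}$ so $s$ dominates the singleton $\{t''\}$, the head $t$ stays a sink (only an in-arc was removed) and remains satisfied, and all other vertices are untouched, yielding the desired dominator coloring of an orientation of $P_{m}$ on $|\mathcal{C}|$ colors.

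I expect the alternating case to be the main obstacle, and within it the sub-claim that a uniquely colored sink must exist; getting the overlap/counting argument exactly right -- and seeing cleanly why it degenerates at $\ell=2$ -- is where the content lies. I would also be careful to state the domination convention for sinks up front, since the whole reduction rests on a newly created sink being automatically dominated, and to confirm that the leftover small cycles ($m=3,5$) indeed fall under the easy case rather than needing separate treatment.
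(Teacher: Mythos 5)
Your proposal is correct, but it takes a genuinely different and considerably more economical route than the paper. The paper proves the lemma by a case analysis on $m$ modulo $4$ that leans heavily on the structure of optimal path orientations from Theorem \ref{t1}: it splits a non-uniquely colored sink of $C_{4k}$ into two path-ends, inserts vertices into $C_{4k+3}$, recolors, and along the way proves auxiliary structural facts (no consecutive out-degree-one vertices, and no out-degree-one vertices at all in a minimum coloring of $C_{4k}$). You instead argue uniformly over all orientations by deleting a single arc from an optimal cycle coloring, using that domination depends only on out-neighborhoods (so only the tail is affected) and that sinks are exempt from the domination requirement --- a convention the paper never states explicitly but uses throughout (the two-coloring of $C_{4}$ in Lemma \ref{l1}, the shared color on out-degree-zero vertices in Theorem \ref{t1}), so you were right to pin it down before relying on it. Your case split is exhaustive: any orientation with an out-degree-one vertex is handled by deleting that vertex's out-arc, and the only remaining orientation is the strict source/sink alternation, forcing $m=2\ell$. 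Your sub-claim there is sound: if no sink were uniquely colored, each source would need its two sink-neighbors to form a complete color class, and since consecutive sources share a sink, that sink would lie in two distinct classes of the partition, impossible for $\ell\geq3$; this also isolates exactly why $m=4$ is exceptional, since at $\ell=2$ both sources share the same out-neighborhood and one two-element class serves both --- an explanation the paper handles only as an ad hoc exclusion. What your route buys is brevity, independence from Theorem \ref{t1} (your proof could precede the path theorem entirely), and a structural account of the $C_{4}$ anomaly; what the paper's longer route buys is a reusable byproduct --- the fact that minimum dominator colorings of $C_{4k}$ admit no out-degree-one vertices, which Claim 3 in the proof of Theorem \ref{t2} cites directly from the proof of Lemma \ref{l3} --- so if your argument replaced the paper's, that structural fact would have to be proved separately.
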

\begin{proof}
First, see that a minimum dominator coloring orientation of $P_{m}$ can be embedded into some orientations of $C_{m}$. Since the arc $C_{m}\setminus P_{m}$ can combine at most two color classes, we need only consider the case where $P_{m}$ admits a minimum dominator coloring, for if a cycle $C_{m}$ contains no minimum dominator coloring of any subpath $P_{m}$, $\chi_{d}(C_{m})\geq\chi_{d}(P_{m})$. Thus, to prove this lemma, we will show that any orientation of $C_{m}$ which contains a minimum orientation of $P_{m}$ cannot have a smaller dominator chromatic number, with the exception of $C_{4}$ which can indeed combine two color classes of a minimum dominator coloring of $P_{4}$ in precisely this manner.

Let $P_{m}=v_{1}v_{2}\dots v_{m}$ be a subpath of $C_{m}$ which admits a minimum dominator coloring over all orientations of $P_{m}$. In order for $\chi_{d}(C_{m})<\chi_{d}(P_{m})$, the arc between $v_{1}$ ad $v_{m}$ allows for two color classes to be merged. From this fact, we may conclude that, without loss of generality, $v_{1}$ and $v_{m-1}$ are each uniquely colored in $P_{m}$ (notice that the vertex $v_{m}$ must have out-degree equal to one, for if both end vertices of $P_{m}$ are uniquely colored, the added arc $v_{m}v_{1}$ clearly cannot combine color classes). 

First consider the case where $m=4k+1$ for some $k\in\mathbb{N}$. Since $P_{m}$ has a unique structure and coloring scheme which admits a minimum dominator coloring, it follows that $\chi_{d}(C_{m})\geq\chi_{d}(P_{m})$ since $m-1=4k$ and $1\not\equiv 4k\ (\mathrm{mod}\ 4)$.

Next, consider the case where $m=4k+2$ for some $k\in\mathbb{N}$. Since we may assume that $v_{m}$ has out-degree one, and since there is exactly one vertex of out-degree one in a minimum dominator coloring of $P_{4k+2}$, we may infer that the out-degree sequence of $P_{m}$ is $\{0,2,0,2,\dots,2,0,1\}$, that the color scheme of the vertices $v_{1}$ through $v_{m-1}$ is identical to that of the first $4k$ vertices of a minimum dominator coloring of the path $P_{4k+1}$ ($m-1=4k+1$ in this case), that we may assign $v_{m}$ to the same color class as the vertices of out-degree two, and that the vertex $v_{m-1}$ is uniquely colored. This implies that $v_{1}$ is not uniquely colored, but since $v_{m-1}$ is uniquely colored, we may recolor the odd-indexed vertices by the following color scheme that is a variant of the color scheme of a minimum dominator coloring of the path $P_{4k+1}$ and obtain a minimum dominator coloring of $P_{4k+2}$ that assigns $v_{1}$ to a unique color class.
\begin{equation*}
c(v_{i})=\begin{cases}
c_{1} & \mathrm{if}\ i\equiv\ 0\ (\mathrm{mod}\ 2)\\
c_{2} & \mathrm{if}\ i\equiv\ 3\ (\mathrm{mod}\ 4)\\
\mathrm{unique} & \mathrm{otherwise}
\end{cases}
\end{equation*}
However, under this color scheme, the vertices $v_{1}$ and $v_{m-1}$ cannot belong to the same color class in $C_{m}$ for if they do, the vertex $v_{2}$ no longer dominates any color class. It is east to see that any other dominator coloring will require more colors.

For our next case, we consider $m=4k$ for $k>1$. To show that $\chi_{d}(P_{4k})\leq\chi_{d}(C_{4k})$, consider a minimum dominator coloring of $C_{4k}$. If there exists some vertex $v\in V(C_{4k})$ such that $d^{+}(v)=0$ and $v$ is not uniquely colored, then we may split $v$ into two non-adjacent vertices, each of the same color as $v$, creating a proper dominator coloring of an orientation of $P_{4k+1}$. Since $\chi_{d}(P_{4k})=\chi_{d}(P_{4k+1})$, this implies that $\chi_{d}(P_{4k})\leq\chi_{d}(C_{4k})$. Thus we need only to show that there exists such a vertex in any minimum dominator coloring of $C_{4k}$.

To accomplish this, we first show that there are no consecutive vertices of out-degree one in a minimum orientation. Assume there are. Then there a subset $\{v_{i},v_{i+1},v_{i+2},v_{i+3}\}$ of $V(C_{4k})$ with out-degree sequence either $\{2,1,1,0\}$ or $\{1,1,1,0\}$. In either case we may reverse the orientation of the arc $v_{i+1}v_{i+2}$ to $v_{i+2}v_{i+1}$ and change only the color of the vertex $v_{i+2}$ to the color assigned to the vertices of out-degree two (such a vertex exists because there is a bijection between out-degree zero and out-degree two vertices in an orientation of a cycle). This contradicts the assumption that the dominator coloring was minimum.

Next we show that there are no vertices of out-degree one. Since there are no consecutive vertices of out-degree one, the existence of a vertex of out-degree one implies that there is a subsequence $\{v_{i},v_{i+1},v_{i+2}\}$ of $C_{4k}=v_{1}v_{2}\dots v_{4k}v_{1}$ with the out-degree sequence $\{2,1,0\}$. If $v_{i}$ is uniquely colored, then we may remove $v_{i}$ and insert it after some other vertex of out-degree one, maintaining a proper dominator coloring of $C_{4k}$ using the same number of colors. But we just showed that any dominator coloring of $C_{4k}$ with consecutive vertices of out-degree one is not a minimum dominator coloring. Thus we can assume that no vertex of out-degree one is uniquely colored. By removing all vertices of out-degree one, say $2m$, we reduce the number of colors used by some value $r\geq1$. Additionally, each vertex of out-degree two that was adjacent to a vertex of out-degree one now dominates two color classes comprised of one element each. Since we can recolor one of these two dominated vertices with the color of the other dominated vertex in every case, at worst every two vertices of out-degree one correspond to a reduction in the number of colors used in this smaller cycle (it is possible that a vertex of out-degree two dominated two vertices of out-degree one that together comprised an entire color class in the original dominator coloring of $C_{4k}$). Thus we have reduced the number of colors used by at least $m+1$, and so it suffices to show that we can insert these $2m$ vertices using no more than $m$ colors. To do this, simply chose a vertex of out-degree two, call it $v_{i}$, which dominates a single color class of two vertices, i.e., a vertex of out-degree two which was adjacent to a vertex of out-degree one in the original orientation of $C_{4k}$. Between $V_{i}$ and $v_{i+1}$ insert the removed vertices ($\{x_{1},\dots,x_{2m}\}$) by orienting them such that their out-degree is $\{0,2,\dots,0,2\}$. Since the vertices now having out-degree two can all be assigned to an existing color class, this leaves only $m$ vertices needing color assignments. If we color $x_{1}$ with the same color as $v_{i-1}$ and uncolor the vertex $v_{i+1}$, we still have only $m$ vertices needing colors assigned to them. Even if each receives a unique color, we have still reduced the number of colors used to properly dominator color $C_{4k}$. Thus a minimum dominator coloring of $C_{4k}$ has no vertices of out-degree one.

From here it is obvious that any minimum dominator coloring o $C_{4k}$ must have non-uniquely colored vertices of out-degree zero.

Lastly, consider the case when $m=4k+3$. Particularly, consider a minimum dominator coloring of $C_{4k+3}$. Since Theorem \ref{t1} tells us that $\chi_{d}(P_{4k+3})=\chi_{d}(P_{4(k+1)})$, and since we just established that $\chi_{d}(P_{4(k+1)})\leq\chi_{d}(C_{4(k+1)})$, it suffices to prove that $\chi_{d}(C_{4(k+1)})\leq\chi_{d}(C_{4k+3})$. To do this, we simply extend our minimum dominator coloring of $C_{4k+3}$ to a proper dominator coloring of $C_{4(k+1)}$.

With a similar argument to that used to establish valid out-degree orientations for minimum dominator colorings in the case of $C_{4k}$, we can assume the existence of a subsequence $\{v_{i-1},v_{i},v_{i+1}\}$ of $V(C_{4k+3})$ which has out-degree sequence $\{2,1,0\}$. If $v_{i}$ is uniquely colored, then we may insert a vertex $u$ between $v_{i-1}$ and $v_{i}$ that has out-degree zero, creating an orientation of $C_{4(k+1)}$. By coloring $u$ with $c(v_{i})$ and by recoloring $v_{i}$ with $c(v_{i-1})$, we create a proper dominator coloring of $C_{4(k+1)}$ on $\chi_{d}(C_{4k+3})$ colors, establishing that $\chi_{d}(C_{4k+3})\geq\chi_{d}(C_{4(k+1)})$ as desired.

Next assume that the vertex $v_{i}$ is not uniquely colored. Either the vertex $v_{i-2}$ and $v_{i}$ together comprise an entire color class, or the vertex $v_{i-2}$ is uniquely colored, else $v_{i-1}$ does not dominate any color class, contradicting our assumption of a minimum dominator coloring of $C_{4k+3}$. If $c(v_{i-2})=c(v_{i})$, then we may insert a vertex $u$ of out-degree zero between $v_{i-1}$ and $v_{i}$, color $u$ with $c(v_{i})$, and recolor $v_{i}$ with $c(v_{i-1})$. If $v_{i-2}$ is uniquely colored, we may insert a vertex $u$ of out-degree zero between $v_{i-1}$ and $v_{i}$, color $u$ with $c(v_{i})$, and recolor $v_{i}$ with $c(v_{i-1})$. In either case we again establish the inequality $\chi_{d}(C_{4k+3})\geq\chi_{d}(C_{4(k+1)})$. This completes the case of $m=4k+3$ and thus completes the proof of the lemma.
\end{proof}

With this very important lemma proven, we are now ready to prove the minimum dominator chromatic number of cycles.
\begin{theorem}\label{t2}
The minimum dominator chromatic number over all orientations of the cycle $C_{n}$ is given by $\chi_{d}(C_{n})=k+2$ where $n=4k-i$ for $i\in\{0,1,2,3\}$\footnote{Do notice the definition of $n$ as $n=4k-i$ for $i\in\{0,1,2,3\}$. While being slightly cumbersome notationally, this expression most succinctly expresses the value of the smallest possible dominator chromatic number as a function of the parameter $k$.} with the exceptions $\chi_{d}(C_{4})=2$ and $\chi_{d}(C_{5})=\chi_{d}(C_{6})=3$.
\end{theorem}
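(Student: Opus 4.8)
The plan is to prove the formula by establishing matching lower and upper bounds in each residue class of $n$ modulo $4$, and to dispatch $C_4,C_5,C_6$ by hand. The backbone is Lemma~\ref{l3} together with the path values of Theorem~\ref{t1}, namely $\chi_{d}(P_{4k})=\chi_{d}(P_{4k+1})=k+2$ and $\chi_{d}(P_{4k+2})=\chi_{d}(P_{4k+3})=k+3$. For $n\equiv 0,2,3 \pmod{4}$ the lower bound is immediate: Lemma~\ref{l3} gives $\chi_{d}(C_n)\ge\chi_{d}(P_n)$, and the right-hand side already equals the claimed value ($k+2$ for $n=4k$; $k+3$ for $n=4k+2$ and $n=4k+3$). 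So for these three residues only a matching upper bound remains, i.e.\ a single orientation and coloring attaining the target.

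For the upper bounds I would mirror the extremal path colorings of Theorem~\ref{t1}. When $n$ is even, orient $C_n$ so that out-degrees alternate $\{0,2,0,2,\dots\}$ around the cycle; then all out-degree-two vertices receive one shared color (as forced in Claim~1 of Theorem~\ref{t1}), every second sink is given its own unique color so that each source dominates that singleton, and the remaining sinks are pooled into one further class. A direct count gives $k+2$ colors when $n=4k$ and $k+3$ when $n=4k+2$, the odd number of sinks in the latter forcing exactly one extra unique class. For $n=4k+3$ I would instead start from the unique extremal coloring of $P_{4k+3}$ and close the cycle with a single arc, orienting that arc so that no vertex loses the class it dominates and no monochromatic edge appears; this attains $k+3$, which with the lower bound above yields equality.

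The genuinely new work, and the main obstacle, is the residue $n\equiv 1 \pmod{4}$, where Lemma~\ref{l3} only delivers $\chi_{d}(C_{4k+1})\ge k+2$ while the claim is $k+3$; here one must show the cycle is \emph{strictly} costlier than the path. I would run the reduction moves from the proof of Lemma~\ref{l3} (reverse an arc to destroy any consecutive out-degree-one vertices, then eliminate isolated out-degree-one vertices) to argue that a minimum orientation wants every out-degree in $\{0,2\}$; but such a sequence has even length, so for the odd cycle $C_{4k+1}$ at least one out-degree-one vertex is forced by parity. That vertex must dominate its unique out-neighbor, which is therefore a singleton class, and since additional out-degree-one vertices only cost more, a minimum orientation has exactly one. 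Tallying the unavoidable classes — one shared color for the $2k$ sources, $k$ singleton sinks letting the sources pair up and dominate, the forced singleton out-neighbor, and one more color for the out-degree-one vertex itself (whose two colored neighbors block every reuse) — gives $\chi_{d}(C_{4k+1})\ge k+3$, with a matching $(k+3)$-coloring produced from exactly this orientation. Concretely, the unique minimum coloring of $P_{4k+1}$ places both endpoints (two sinks at positions $\equiv 1\pmod{4}$) in the pooled class, and identifying the ends into a cycle makes them adjacent and monochromatic, so one extra color is genuinely unavoidable — the same phenomenon seen from the coloring side.

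Finally I would settle the exceptions directly. Orienting $C_4$ with out-degree sequence $\{0,2,0,2\}$ lets each of the two sources dominate the common class of the two sinks, so $\chi_{d}(C_4)=2$. For $C_5$ and $C_6$, short alternating orientations admit explicit $3$-colorings (for $C_6$, put both ``end'' sinks in one class and the third sink in a second, with all sources in a third), beating the generic formula precisely because the short wrap-around permits extra merging. Assembling the four residue classes with these three exceptions gives $\chi_{d}(C_n)=k+2$ for $n=4k-i$, $i\in\{0,1,2,3\}$. I expect the parity-driven strict lower bound at $n\equiv 1\pmod{4}$ to be the crux; once the extremal path structures of Theorem~\ref{t1} are in hand, the even-cycle upper bounds and the small exceptional cases are routine.
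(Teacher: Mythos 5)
Your overall architecture is sound and in outline close to the paper's: lower bounds from Lemma~\ref{l3} plus the path values of Theorem~\ref{t1}, explicit colorings of the alternating orientation for the upper bounds, the crux isolated at $n\equiv 1\pmod 4$, and $C_4,C_5,C_6$ by hand. Your upper-bound constructions all check out, and your treatment of $n=4k+3$ --- close the extremal coloring of $P_{4k+3}$ with one arc oriented out of the pooled endpoint toward the uniquely colored endpoint, so that out-neighborhoods only grow and domination is preserved --- is actually cleaner than the paper's detour through $\chi_{d}(C_{4(k+1)})\leq\chi_{d}(C_{4k+3})$ in Lemma~\ref{l3} and Claims~5--7 of its Theorem~\ref{t2} proof. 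The parity observation that any orientation of an odd cycle has an odd (hence positive) number of out-degree-one vertices is correct and is a legitimate foundation for the strict bound; the reduction to \emph{exactly one} such vertex needs the same elimination moves the paper carries out in Lemma~\ref{l3} and Claim~2, but that part is fixable bookkeeping.

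The genuine gap is the final tally for $\chi_{d}(C_{4k+1})\geq k+3$, two of whose summands are wrong. First, the out-degree-one vertex $u$ does \emph{not} need its own color: properness only blocks the classes of its two neighbors, and in actual minimum $(k+3)$-colorings $u$ shares a class. For example, orient $C_9$ as $v_0\to v_1\leftarrow v_2\to v_3\leftarrow v_4\to v_5\leftarrow v_6\to v_7\leftarrow v_8\to v_0$ with $u=v_0$; the coloring with classes $\{v_2,v_4,v_6,v_8\}$, $\{v_1\}$, $\{v_5\}$, $\{v_7,v_0\}$, $\{v_3\}$ is a proper dominator coloring on $k+3=5$ colors in which $u$ sits inside a doubleton dominated by $v_8$. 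Second, your summands are not disjoint: the forced singleton $\{w\}$ (the out-neighbor of $u$) has a second in-neighbor which is a source, so it typically doubles as one of the $k$ classes your sources ``pair up'' on, and sources may dominate two-element classes (their full out-neighborhoods) rather than only singleton sinks --- so $1+k+1+1$ does not follow from anything you established. A correct argument must count globally: suppose only $k+2$ classes with $2k$ sources, $2k$ sinks, and one vertex $u$ of out-degree one. Sources have in-degree zero, so no class containing a source is dominated; every dominated class has size at most two and at most two dominators, so the $2k+1$ dominating vertices force at least $k+1$ dominated classes, leaving exactly one class for all sources and forcing every non-source into a dominated class. Since $\{w\}$ is a forced singleton, the remaining $2k$ non-sources split into $k$ doubletons, each dominated only by the unique source whose entire out-neighborhood it is; hence at most $k+1$ sources dominate anything while all $2k$ must, a contradiction for $k\geq 2$ --- and $k=1$ is precisely why $C_5$ escapes with $3$ colors. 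Finally, your fallback remark that identifying the endpoints of the unique extremal coloring of $P_{4k+1}$ creates a monochromatic edge rules out only that one construction, not an arbitrary $(k+2)$-coloring of an arbitrary orientation of $C_{4k+1}$; that observation is in fact the germ of the paper's Claim~6, which makes it rigorous by exploiting the uniqueness of the extremal path structure together with a case analysis on the degrees and colors of the endpoints. As written your crux step would not survive; patched with the counting argument above (or with the paper's Claim~6), the rest of your plan goes through.
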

\begin{proof}
In the same fashion as the last proof, this proof will consist of a series of claims and proofs of these claims which, in total, will prove this theorem. Observe first that the cycle $C_{3}$ is a tournament and thus $\chi_{d}(C_{3})=3$, and also that $\chi_{d}(C_{5})=\chi_{d}(C_{6})=3$.

\textbf{Claim 1.} Let $S=\{v\in V(C_{n})|d^{+}(v)=2\}$. Then $S$ is colored completely by one color class in any minimum dominator coloring of $C_{n}$.

\textbf{Proof.} No vertex in any orientation of a cycle can dominate a vertex with out-degree two, hence all out-degree two vertices may be assigned to the same color class. $\Diamond$

\textbf{Claim 2.} Let $C_{n}=v_{1}v_{2}\dots v_{n}v_{1}$ be an orientation of a cycle on $n$ vertices and let $\mathcal{C}$ be a dominator coloring of $C_{n}$ using fewest possible colors. Then we have that $\nexists\ v_{i},v_{i+1}\in V(C_{n})$ s.t. $d^{+}(v_{i})=d^{+}(v_{i+1})=1$.

\textbf{Proof.} Assume not. First, consider the case in which the out-degree sequence for $C_{n}$ is $\{1,1,1,\dots,1,1,1\}$. It is immediately obvious that the out-degree sequence $\{0,2,0,2,\dots,0,2\}$ is an orientation of $C_{n}$ which can be dominator colored with fewer colors than were used when the out-degree sequence was $\{1,1,1,\dots,1,1,1\}$. This implies that $\exists\ u,v\in V(C_{n})$ such that $d^{+}(u)=0$ and $d^{+}(v)=2$.

Next, assume that there is a sequence of at least two consecutive vertices in $C_{n}$ which all have out-degree equal to one. Do notice that these vertices must be preceded by a vertex of out-degree two. Let the subsequence of $V(C_{n})$ whose out-degree sequence is $\{2,1,1\}$ be denoted by $\{v_{i},v_{i+1},v_{i+2}\}$. We know that the vertex $v_{i+2}$ and the out-neighbor of $v_{i+2}$ are both uniquely colored since each of these vertices is dominated by a vertex of out-degree one. If we reverse the orientation of the arc $v_{i+1}v_{i+2}$ so that it becomes $v_{i+2}v_{i+1}$, we may recolor the vertex $v_{i+2}$ with the same color that was assigned to $v_{i}$ and all other vertices with out-degree two since we now have $d^{+}(v_{i+2})=2$. This alone suffices to complete the proof since $v_{i+2}$ was previously uniquely colored. $\Diamond$

\textbf{Claim 3.} There are no vertices with out-degree one in any minimum dominator coloring of the cycle $C_{4k}$.

\textbf{Proof.} This has been established in the proof of Lemma \ref{l3} when showing that $\chi_{d}(P_{4k})\leq\chi_{d}(C_{4k})$. $\Diamond$

With these first three claims intact, we are now ready to begin to prove results on the smallest possible dominator chromatic number for cycles.

\textbf{Claim 4.} $\chi_{d}(C_{4k})=k+2$.

\textbf{Proof.} Let $\mathcal{C}$ be a minimum dominator coloring of the cycle $C_{4k}$. By choosing any non-uniquely colored vertex of out-degree zero in $C_{4k}$, call it $v_{1}$ for convenience, we may split $v_{1}$ into two non-adjacent vertices $v_{1}$ and $v_{4k+1}$, each with the same color, thus creating a path of length $4k+1$. Since all vertices of out-degree two still dominate a color class, this constitutes a proper dominator coloring of an orientation of the path $P_{4k+1}$, hence $\chi_{d}(P_{4k+1})\leq\chi_{d}(C_{4k})$.

Next, let $\mathcal{C}$ be a minimum proper dominator coloring of the path $P_{4k+1}=v_{1}v_{2}\dots v_{4k+1}$. From Theorem \ref{t1} we know that both end vertices of $P_{4k+1}$ are colored with the same color. Given this, we may merge the vertices $v_{1}$ and $v_{4k+1}$ into a single vertex with the same color as $v_{1}$ and $v_{4k+1}$ to create a cycle of length $4k$ that has a proper dominator coloring, whence $\chi_{d}(C_{4k})\leq\chi_{d}(P_{4k+1})$ and the proof of this claim is complete. $\Diamond$

\textbf{Claim 5.} $\chi_{d}(C_{4(k+2)})=\chi_{d}(C_{4(k+1)})+1$ for $k\in\mathbb{N}$.

\textbf{Proof.} First observe that $\chi_{d}(C_{4})=2$ and $\chi_{d}(C_{8})=4$, hence the importance of the indexing in this claim. From Claim 3 we know that the out-degree sequence of any cycle is precisely $\{0,2,0,2,\dots,0,2,0,2\}$. Let $v_{i}\in V(C_{4(k+1)})$ have out-degree two and let $v_{i-1}$ be uniquely colored. By inserting four consecutive vertices in between $v_{i-1}$ and $v_{i}$ with out-degree sequence $\{2,0,2,0\}$ we may extend this dominator coloring to $C_{4(k+2)}$ by using only one more color as follows. Call these four new vertices $w$, $x$, $y$, and $z$. We may color $w$ and $y$ with the same color as $v_{i}$ since they all have out-degree two. The vertex $x$ may be colored with a non-dominated color class for vertices of out-degree zero. By coloring the vertex $z$ uniquely, we show that $\chi_{d}(C_{4(k+2)})\leq\chi_{d}(C_{4(k+1)})+1$ ($\chi_{d}(C_{4(k+2)})$ cannot be less than $\chi_{d}(C_{4(k+1)})$ due to Theorem \ref{t1} and Lemma \ref{l3}).

Now assume that there is a smallest counterexample to this claim, call it $C_{4(k+2)}$ for some fixed $k>1$. It must be the case that $\chi_{d}(C_{4(k+2)})=\chi_{d}(C_{4(k+1)})$. But if this is true, we may remove any four consecutive vertices from $C_{4(k+2)}$ and get a proper dominator coloring of $C_{4(k+1)}$ on fewer colors than $\chi_{d}(C_{4(k+2)})$, a contradiction, To see that this holds, see that any four consecutive vertices, say $v_{i}$ through $v_{i+3}$, of $C_{4(k+2)}$ have out-degree sequence $\{0,2,0,2\}$ or $\{2,0,2,0\}$. Since the entire out-neighborhood of $v_{i+1}$ ($v_{i+2}$, respectively) is included in this sequence, thus some entire color class is also contained in this sequence. This allows us to derive the desired contradiction and completes the proof of this claim. $\Diamond$

\textbf{Claim 6.} $\chi_{d}(C_{4k+1})>\chi_{d}(C_{4k})$.

\textbf{Proof.} From Theorem \ref{t1}, Lemma \ref{l3}, and Claim 4 of this theorem, we know that $\chi_{d}(C_{4k})=\chi_{d}(P_{4k})=\chi_{d}(P_{4k+1})\leq\chi_{d}(C_{4k+1})$. Hence it suffices to show that $\chi_{d}(P_{4k+1})<\chi_{d}(C_{4k+1})$. Since we know that there is a unique orientation and color scheme combination for $P_{4k+1}$ which admits a minimum dominator coloring over all orientations of $P_{4k+1}$, and since the end vertices of this $P_{4k+1}$ are given the same color, it is impossible to extend a minimum dominator coloring of $P_{4k+1}$ to $C_{4k+1}$ using only the same color pallet. Hence we may assume that any sub-path $P_{4k+1}$ of any orientation of $C_{4k+1}$ which attains a minimum dominator coloring over all orientations of $C_{4k+1}$ must use at least $k+3$ colors rather than the $k+2$ colors specified in Theorem \ref{t1}. Let $P_{4k+1}=v_{1}v_{2}\dots v_{4k+1}$ and, without loss of generality, assume that we are adding the arc $v_{4k+1}v_{1}$ to complete $C_{4k+1}$. Then it must be the case that $v_{1}$ and $v_{4k}$ were both uniquely colored, else we cannot reduce the number of colors used in our dominator coloring of $C_{4k+1}$. Consider the vertex $v_{1}$. Either $d^{+}(v_{1})=1$ or $d^{+}(v_{1})=0$. 

If $d^{+}(v_{1})=1$, then $v_{2}$ must also be uniquely colored. This means that the vertices $S=\{v_{1},v_{2},v_{4k},v_{4k+1}\}$ collective require four three colors (each of $v_{1}$, $v_{2}$, and $v_{4k}$ are uniquely colored). Of these four colors, only the color assigned to $v_{4k+1}$ may appear elsewhere in $C_{4k+1}$, else we do not have a proper dominator coloring. Since the induced subgraph $D[V(D)\setminus S]$ amounts to an orientation of $P_{4(k-1)+1}$, we know that we must use at least $(k-1)+2=k+1$ colors to color these vertices. Together this all implies that we need at least $k+1+2=k+3$ colors for $C_{4k+1}$ if $d^{+}(v_{1})=1$, even if we combine a color class when creating $C_{4k+1}$ from $P_{4k+1}$.

If $d^{+}(v_{1})=0$ then we may color the vertices of $S$ with three colors (one each for $v_{1}$ and $v_{4k}$, but $v_{2}$ and $v_{4k+1}$ may belong to the same color class). Again see that the induced subgraph $D[V(D)\setminus S]$ is an orientation of $P_{4(k-1)+1}$ and requires at least $k+1$ colors in any proper dominator coloring. Moreover, this subpath can only attain $k+1$ colors if both end vertices are not uniquely colored. Since we assumed $v_{1}$ is uniquely colored, we may add $v_{1}$ and $v_{2}$ back to the path, creating a path of length $4(k-1)+3$ using $k+2$ colors. Adding $v_{4k}$ and $v_{4k+1}$ using their original colors implies that the vertex $v_{4k-1}$, the vertex that was the end vertex of the path $D[V(D)\setminus S]$ of length $4(k-1)+1$ has out-degree one in $P_{4k+1}$ and thus must dominate the color class assigned to $v_{4k}$. This implies that if we use $k+3$ colors to properly dominator color $P_{4k+1}$ and require the vertices $v_{1}$ and $v_{4k}$ to each be uniquely colored, adding the arc $v_{4k+1}v_{1}$ does not permit the vertex $v_{4k+1}$ to combine color classes in $C_{4k+1}$. Hence $\chi_{d}(C_{4k+1})\geq k+3$. Since this now covers all possible cases, the proof of this claim is complete. $\Diamond$

\textbf{Claim 7.} $\chi_{d}(C_{4k})>\chi_{d}(C_{4k-i})$ for $i\in\{1,2,3\}$.

\textbf{Proof.} First, see that Claims 5 and 6 just established this result for the case of $i=3$, and the case of $i=1$ was actually proven in the conclusion of the proof of Lemma \ref{l3} (in the case of $m=4k+3$). Thus we need only to prove that this holds in the case of $i=2$. 

Our goal is to construct a minimum dominator coloring of $C_{4k+2}$ which can be extended to a dominator coloring of $C_{4(k+1)}$ without the addition of new color classes since Claims 4 and 5 have established for us that $\chi_{d}(C_{4k})=k+2$ and $\chi_{d}(C_{4(k+1)})=k+3$, and since Theorem \ref{t1} and Lemma \ref{l3} combine to tells us that $k+3=\chi_{d}(P_{4k+2})\leq\chi_{d}(C_{4k+2})$. To do this, begin with a minimum dominator coloring of the smaller path $P_{4k+1}=v_{1}v_{2}\dots v_{4k+1}$. We know precisely what this looks like, and that it uses $k+2$ colors, so we may create our minimum dominator coloring of $P_{4k+2}$ by recoloring the vertex $v_{4k+1}$ with a new color, adding the vertex $v_{4k+2}$, assigning $v_{4k+2}$ to the same color class as $v_{4k}$ (which is a vertex of out-degree two in $P_{4k+1}$), and by adding the arc $v_{4k+2}v_{4k+1}$. Since this uses $k+3$ colors, this is a minimum dominator coloring of $P_{4k+2}$. Since $\chi_{d}(P_{4k+2})\leq\chi_{d}(C_{4k+2})$, the arc $v_{4k+2}v_{1}$ establishes a minimum dominator coloring of $C_{4k+2}$ on precisely $k+3$ colors. This establishes that $\chi_{d}(C_{4k})<\chi_{d}(C_{4k+2})=\chi_{d}()$, and the proof of this last case is complete. $\Diamond$

This completes the proof of the theorem.
\end{proof}
\begin{corollary}\label{cor1}
The directed cycle is the unique orientation of a cycle which maximizes the dominator chromatic number of an orientation of a cycle.
\end{corollary}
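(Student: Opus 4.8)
The plan is to split the corollary into two independent assertions: that the directed cycle attains the maximum possible value of the dominator chromatic number over all orientations of a fixed cycle, and that it is the only orientation to do so. The first assertion needs no new work. By Observation \ref{o2} every orientation $D$ of the $n$-cycle obeys $\chi_{d}(D)\le n$, and by Proposition \ref{p2} the directed cycle $C$ satisfies $\chi_{d}(C)=n$. Hence the directed cycle realizes the upper bound, and all of the remaining content is the uniqueness claim, which I would phrase contrapositively: every orientation of the $n$-cycle that is not the directed cycle has $\chi_{d}<n$.

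For the structural setup I would first record that the directed cycle is precisely the orientation of $C_{n}$ in which every vertex has out-degree exactly $1$. Because in any orientation of a cycle the number of out-degree-$0$ vertices equals the number of out-degree-$2$ vertices (the degree-sum bijection already used in Claim 1 of Theorem \ref{t2}), an orientation $D$ fails to be the directed cycle if and only if it contains a source $s$ with $d^{+}(s)=2$, and then it also contains at least one sink. Thus the whole problem reduces to exhibiting, for every orientation carrying a source, a proper dominator coloring on at most $n-1$ colors.

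The mechanism I would exploit is exactly the feature distinguishing a source from a degree-$1$ vertex. In the directed cycle each out-neighbourhood is a single vertex, so the only class a vertex can dominate is a singleton, which forces all $n$ vertices into distinct classes. A source $s$, by contrast, has $N^{+}(s)=\{a,b\}$ with $a\ne b$, so $s$ can be made to dominate the two-element class $\{a,b\}$. Concretely I would place $a$ and $b$ in one color class and keep every other vertex as its own class: provided $a$ and $b$ are non-sinks, each has $s$ as its unique in-neighbour (a source has in-degree $0$), so no third vertex relies on $a$ or $b$ as a dominated singleton, and the resulting coloring is a proper dominator coloring using $n-1$ colors. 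When $D$ has two or more sources the argument is even cleaner: by Claim 1 of Theorem \ref{t2} all out-degree-$2$ vertices already lie in one class, and since sources are pairwise non-adjacent (each having in-degree $0$), this alone forces a class of size at least two.

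The step I expect to be the main obstacle is the genuinely degenerate configuration of a single source together with a single sink, and in particular the case where the sink is adjacent to the source, so that one of $a,b$ is itself a sink and cannot be absorbed into a larger class. There I would instead merge the source with a suitably chosen non-sink vertex non-adjacent to it, confirming the existence of such a vertex by a short count of the $n-1$ non-sinks against the at most two neighbours of $s$. Finally, the small cycles $n\in\{3,4,5,6\}$ fall outside the generic counting argument and would be checked directly against the explicit values recorded in Theorem \ref{t2}, for instance $\chi_{d}(C_{4})=2$ and $\chi_{d}(C_{5})=\chi_{d}(C_{6})=3$, each strictly below the corresponding $n$. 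Combining the generic merge with these finitely many verifications shows that no non-directed orientation reaches $n$ colors, which establishes the uniqueness and completes the corollary.
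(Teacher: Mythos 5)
Your generic constructions are sound: with two or more sources, putting all out-degree-two vertices into a single class and every other vertex into a singleton is a proper dominator coloring on at most $n-1$ colors (sources are pairwise non-adjacent, and no vertex has a source as an out-neighbour, since sources have in-degree zero), and with a unique source $s$ whose two out-neighbours $a,b$ are both non-sinks, merging $\{a,b\}$ works exactly as you say. The gap is in the degenerate case you yourself flagged, and your repair does not close it: non-adjacency of $s$ and $w$ secures only properness, not domination. Any non-sink $w$ not adjacent to $s$ has exactly one in-neighbour $u$, and $u\neq s$, so $d^{+}(u)=1$ with sole out-neighbour $w$; the only class $u$ can dominate is the class containing $w$, which must therefore be exactly $\{w\}$. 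Merging $s$ into that class leaves $u$ dominating nothing, since there is no arc $us$ ($s$ has in-degree zero). So your proposed $(n-1)$-coloring is not a dominator coloring.

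Worse, no alternative merge can rescue this case, because the uniqueness claim itself fails there. Consider the orientation $v_{1}\to v_{2}\to\cdots\to v_{n}$ together with the arc $v_{1}\to v_{n}$, i.e.\ unique source adjacent to the unique sink. For $2\leq i\leq n-1$ the vertex $v_{i}$ has sole out-neighbour $v_{i+1}$, so the class of each of $v_{3},\dots,v_{n}$ is forced to be a singleton; $v_{1}$ and $v_{2}$ are adjacent, cannot share a class, and joining either to a forced singleton destroys the domination of the preceding vertex. Hence this orientation also requires $n$ colors and ties the directed cycle (for $n=3$ the transitive triangle already ties, by Observation \ref{o3}). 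Your finite small-case check cannot catch this because it misreads Theorem \ref{t2}: the values $\chi_{d}(C_{4})=2$ and $\chi_{d}(C_{5})=\chi_{d}(C_{6})=3$ recorded there are minima over orientations, so they say nothing about whether some non-directed orientation reaches $n$. Note that the paper states this corollary with no proof at all; under its operative convention (every vertex of positive out-degree must out-dominate an entire color class, with sinks exempt), the obstacle you identified is genuine and unfixable, and the corollary can hold at most with uniqueness weakened to exclude orientations whose unique source is adjacent to the unique sink.
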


\section{Orientations of $K_{n}$ and $K_{m,n}$}\label{s5}
Orientations of complete graphs, i.e., tournaments, are rather easily characterized. As a direct consequence of Observations \ref{o1} and \ref{o2}, we provide the dominator chromatic number of any orientation of a complete graph with the following observation.
\begin{obs}\label{o3}
For any tournament $T_{n}$ on $n$ vertices, we have $\chi_{d}(T_{n})=n$.
\end{obs}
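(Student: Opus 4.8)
The plan is to prove this by squeezing $\chi_{d}(T_{n})$ between two matching bounds supplied by Observations \ref{o1} and \ref{o2}, exactly as the paper's phrasing ``direct consequence'' suggests. The single structural fact that drives everything is that the underlying graph $G_{T_{n}}$ of any tournament on $n$ vertices is the complete graph $K_{n}$, since a tournament is by definition an orientation of $K_{n}$ and every pair of vertices is joined by an arc in one direction or the other.

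For the lower bound I would invoke Observation \ref{o1}, which gives $\chi_{d}(T_{n})\geq\chi(T_{n})$. It then suffices to compute $\chi(T_{n})$. Because the underlying graph is $K_{n}$, any two vertices are adjacent, so any proper coloring must assign every vertex a distinct color; hence $\chi(T_{n})=\chi(K_{n})=n$. Chaining these gives $\chi_{d}(T_{n})\geq n$. It is worth emphasizing in the write-up that this bound uses only the \emph{proper} coloring requirement built into any dominator coloring, and not the domination condition at all.

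For the upper bound I would simply apply Observation \ref{o2}, namely $\chi_{d}(T_{n})\leq|V(T_{n})|=n$. Combining the two inequalities yields $\chi_{d}(T_{n})=n$, completing the argument. The only point that deserves a sentence of justification, rather than being left entirely to Observation \ref{o2}, is \emph{why} the $n$-color all-distinct assignment is a valid dominator coloring: with every color class a singleton, each vertex dominates the class consisting of itself, so the domination condition is satisfied automatically regardless of the orientation (in particular even at a sink, which has no out-arcs).

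I do not anticipate a genuine obstacle here; the result is a two-line sandwich. The closest thing to a subtlety is confirming that the trivial coloring realizing the bound of Observation \ref{o2} is admissible for the domination requirement, and that subtlety evaporates once one observes that each vertex dominates its own singleton color class. The proof is therefore essentially a corollary of the earlier observations together with the elementary fact $\chi(K_{n})=n$.
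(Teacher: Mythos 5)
Your proposal is correct and takes exactly the paper's route: the paper gives no separate proof, stating the observation as a direct consequence of Observations \ref{o1} and \ref{o2}, which is precisely your sandwich $n=\chi(K_{n})\leq\chi_{d}(T_{n})\leq|V(T_{n})|=n$. Your added remark that every vertex dominates its own singleton color class (so the all-distinct coloring is a legitimate dominator coloring even at a sink) correctly supplies the one convention the paper leaves implicit, and on which Observation \ref{o2} itself silently relies.
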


As it turns out, complete bipartite graphs are also a very important class of digraphs when it comes to dominator colorings. The following theorem provides another complete characterization of a very important problem in dominator colorings of digraphs.
\begin{theorem}\label{t3}
Let $D$ be a simple, connected digraph. Then $\chi_{d}(D)=2\iff D=K_{m,n}$ with partite sets $X$ and $Y$ satisfying $xy\in A(D)\ \forall\ x\in X\ \mathrm{and}\ \forall\ y\in Y$.
\end{theorem}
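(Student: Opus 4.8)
The plan is to prove the biconditional in two directions, with the forward (harder) direction being the main substance. Throughout, I will use the definition that a dominator coloring of a digraph $D$ requires, for each $v \in V(D)$, the existence of a color class $C$ such that $vu \in A(D)$ for all $u \in C$; in particular $v$ must have an out-arc to every vertex of the class it dominates.

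For the reverse direction, suppose $D = K_{m,n}$ with all arcs oriented from $X$ to $Y$. I would color every vertex of $X$ with color $c_1$ and every vertex of $Y$ with color $c_2$. This is proper since $D$ is bipartite with parts $X, Y$ and no arc lies inside a part. Each $x \in X$ has an out-arc to every vertex of $Y$, so $x$ dominates the class $c_2$; each $y \in Y$ has no out-arcs, so $y$ dominates the empty class vacuously, or I would note that with the convention used elsewhere in the paper, a sink vertex dominates trivially. Thus $\chi_d(D) \le 2$, and since $D$ has at least one arc it is not edgeless, forcing $\chi_d(D) \ge \chi(D) \ge 2$ by Observation \ref{o1}. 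Hence $\chi_d(D) = 2$.

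For the forward direction, assume $\chi_d(D) = 2$ and let the two color classes be $X$ and $Y$. I would first argue that properness forces both $X$ and $Y$ to be independent sets in the underlying graph $G_D$, so $D$ is bipartite with parts $X$ and $Y$ and $D$ is a sub-digraph of an orientation of $K_{m,n}$. The crux is to show the digraph is \emph{complete} bipartite and \emph{uniformly} oriented. Take any vertex $v$; it must dominate one of the two classes. Since $v$ cannot dominate its own class (that class contains $v$ itself, and $vv \notin A(D)$ as $D$ is loopless, and in any case properness precludes an arc to a same-colored vertex), $v$ must dominate the \emph{other} class entirely. So every vertex of $X$ has an out-arc to every vertex of $Y$, or every vertex of $Y$ has an out-arc to every vertex of $X$ — the key point being that domination of the opposite class in full forces completeness of the bipartite structure. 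Then I would use connectedness to rule out a mixed orientation: if some $x \in X$ dominates all of $Y$ and some $y \in Y$ dominates all of $X$, then both $xy$ and $yx$ would be forced for the pair, contradicting simplicity; a careful propagation argument across the connected digraph shows all arcs must point the same way, say $X \to Y$.

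The main obstacle I anticipate is handling the domination requirement for the vertices in whichever class turns out to be the sink side, and cleanly excluding the mixed-orientation case. Specifically, I expect to need a lemma-free argument that no vertex may dominate its own color class, which follows because that class is nonempty and contains a vertex non-adjacent (in arc terms) to $v$ under properness; combined with the fact that each of the two classes must be dominated in its entirety by every vertex of the opposite class, this pins down both completeness and uniform orientation simultaneously. I would close by remarking that the displayed condition $xy \in A(D)$ for all $x \in X, y \in Y$ is exactly the statement that $D$ is the uniformly oriented complete bipartite digraph, matching the construction in the reverse direction.
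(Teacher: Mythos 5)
Your plan follows essentially the same route as the paper's proof: the backward direction uses the identical two-coloring of the parts, and the forward direction partitions $V(D)$ into the two color classes $X$ and $Y$, extracts bipartiteness from properness, notes that no vertex can dominate its own class, forces full domination of the opposite class, uses simplicity against the mixed orientation, and invokes connectedness to obtain completeness. One small simplification: your ``careful propagation argument across the connected digraph'' is unnecessary. A single digon contradiction at one mixed pair already shows that dominating vertices cannot occur in both classes, and connectedness is needed only to guarantee that each $x\in X$ has positive out-degree at all (once every $y\in Y$ is shown to be a sink, a vertex of $X$ with no out-arc would be isolated, contradicting connectedness when $|V(D)|\geq 2$).

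There is, however, one genuine inconsistency you must repair. In the backward direction you excuse the sinks of $Y$ from the domination requirement (as the paper implicitly does; under the literal definition in Section 1, the uniformly oriented $K_{m,n}$ would not even satisfy $\chi_{d}(D)=2$), but in the forward direction you assert ``take any vertex $v$; it must dominate one of the two classes.'' Taken literally, this applies to every $y\in Y$ as well, and since $y$ cannot dominate its own class, every $y$ would need an arc to every $x\in X$; combined with every $x$ dominating all of $Y$, this forces a digon at every cross pair, so your universal claim would show that \emph{no} simple digraph with both classes nonempty has $\chi_{d}(D)=2$ --- contradicting your own backward direction and making the forward implication vacuous. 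The fix is precisely the convention you already invoked: only vertices of positive out-degree are required to dominate a class. This is how the paper's proof actually proceeds --- it never demands anything of the $Y$-vertices, but instead starts from a witness arc $x^{\star}y^{\star}$, concludes that $x^{\star}$ dominates all of $Y$, shows that any arc $yx$ would require $y$ to dominate $X$, which is impossible since $yx^{\star}\notin A(D)$ (it would form a digon with $x^{\star}y$), hence every $y\in Y$ is a sink; connectedness then gives each remaining $\hat{x}\in X$ an out-arc, forcing $\hat{x}$ to dominate all of $Y$ and yielding completeness. With the domination requirement restricted to vertices of positive out-degree, your outline is sound and coincides with the paper's argument.
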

\begin{proof}
($\impliedby$) Let $V(D)=\{X,Y\}$ be a bipartition of $D=K_{m,n}$ satisfying $xy\in A(D)\ \forall\ x\in X\ \mathrm{and}\ \forall\ y\in Y$. Color $x$ with $c_{1}$ for all $x\in X$ and color $y$ with $c_{2}$ for all $y\in Y$.

($\implies$) Let $D$ be a simple, connected digraph with $\chi_{d}(D)=2$. Partition $V(D)$ into two sets $X$ and $Y$ such that $X=\{v\in V(D)|c(v)=c_{1}\}$ and $Y=\{v\in V(D)|c(v)=c_{2}\}$. Since this is a proper dominator coloring of $D$, there do not exists arcs of the form $x_{i}x_{j}$ for $x_{i},x_{j}\in X$ or of the form $y_{i}y_{j}$ for $y_{i},y_{j}\in Y$, hence $\{X,Y\}$ is a bipartition of $V(D)$. Without loss of generality, assume that there exists the arc $x^{\star}y^{\star}\in A(D)$ for $x^{\star}\in X$ and $y^{\star}\in Y$. Since $c(y)=c_{2}\ \forall\ y\in Y$, and since this is a proper dominator coloring of $D$, it follows that $x^{\star}y\in A(D)\ \forall\ y\in Y$. Since $D$ is connected, we have that $\forall\ x\in X\ \exists\ y\in Y$ such that either $xy\in A(D)$ or $yx\in A(D)$. Assume that there exists some $x\in X$ and $y\in Y$ such that $yx\in A(D)$. We know already that $x^{\star}y\in A(D)$ and $c(x)=c(c^{\star})$, so $y$ does not dominate any color class in $D$, a contradiction. Thus $d^{+}(y)=0\ \forall\ y\in Y$ and all that remains to be shown is that $D$ is complete. Assume that $\exists\ \hat{x}\in X$ and $\exists\ \hat{y}\in Y$ such that $\hat{x}\hat{y}\not\in A(D)$. Since $D$ is connected and since $d^{+}(y)=0\ \forall\ y\in Y$, it follows that $d^{+}(\hat{x})\geq 1$. Thus, for $D$ to have a proper dominator coloring, it must be that $\hat{x}\hat{y}\in A(D)$. Therefore, if $\chi_{d}(D)=2$ for a simple, connected digraph $D$, it must be the case that $D=K_{m,n}$.
\end{proof}

\section{Quantifying the Effect of Orientation on Dominator Colorings}\label{s6}
In this section we introduce an interesting graph invariant, $\varsigma^{\star}(D)$, which is the difference between the minimum dominator chromatic number over all all orientations of the digraph and the chromatic number of the underlying graph. This invariant tell us about how impactful orientations of the underlying graph structure are on vertex coloring and domination problems. Formally, the graph invariant $\varsigma^{\star}(D)$ is defined by the following two equations.
\begin{equation}
\varsigma(D)=\chi_{d}(D)-\chi(G_{D})
\end{equation}
\begin{equation}
\varsigma^{\star}(D)=\max\{\varsigma(D)\}\ \mathrm{over\ all\ orientations\ of\ G_{D}}
\end{equation}

With this formal definition intact, we provided several initial results on this graph invariant.
\begin{result}\label{r1}
For all orientations of $K_{n}$, $\varsigma(K_{n})=\varsigma^{\star}(K_{n})=0$.
\end{result}
\begin{proof}
This follows directly from Observation \ref{o3}.
\end{proof}
\begin{result}\label{r2}
For the orientation of  $K_{m,n}$ where all arcs orient from $X$ to $Y$, $\varsigma(K_{m,n})=\varsigma^{\star}(K_{m,n})==0$.
\end{result}
\begin{proof}
This follows directly from Theorem \ref{t3}.
\end{proof}
\begin{result}\label{r3}
For orientations of the path $P_{n}$, $\varsigma(P_{n})=0\iff P_{n}$ is the directed path on $n$ vertices.
\end{result}
\begin{proof}
This follows directly from Theorem \ref{t1}.
\end{proof}
\begin{result}\label{r4}
For orientations of the path $C_{n}$, $\varsigma(C_{n})=0\iff C_{n}$ is the directed cycle on $n$ vertices.
\end{result}
\begin{proof}
This follows directly from Theorem \ref{t2}.
\end{proof}
\begin{result}\label{r5}
For orientations of the path $P_{n}$, we have
\begin{equation*}
\varsigma^{\star}(P_{n})=\begin{cases}
3k-2\ \mathrm{if}\ n=4k\\
3k-1\ \mathrm{if}\ n=4k+1\\
3k-1\ \mathrm{if}\ n=4k=2\\
3k\quad\quad \mathrm{if}\ n=4k=3
\end{cases}
\end{equation*}
\end{result}
\begin{proof}
This follows directly from Theorem \ref{t1}.
\end{proof}
\begin{result}\label{r6}
For orientations of $C_{n}$ we have
\begin{equation*}
\varsigma^{\star}(C_{n})=\begin{cases}
3k-2\ \mathrm{if}\ n=4k\\
3k-2\ \mathrm{if}\ n=4k+1\\
3k-1\ \mathrm{if}\ n=4k=2\\
3k\quad\quad \mathrm{if}\ n=4k=3
\end{cases}
\end{equation*}
\end{result}
\begin{proof}
This follows directly from Theorem \ref{t2}.
\end{proof}

We conclude this section by mentioning that Observation \ref{o1} implies that the digraph invariant is necessarily a non-negative integer.

\section{Conclusion}\label{s7}
This paper developed the beginnings of a theory of dominator coloring for directed graphs. As it turns out, orienting arcs makes the notion of domination much more complicated, so rather than attempting to describe dominator chromatic numbers of digraphs for a given orientation, we focused on proving the minimum dominator chromatic number over all possible orientations for a given graph structure. In particular we proved the minimum dominator chromatic number for orientations of paths and cycles. We described all graphs of dominator chromatic number two with Theorem \ref{t3}.

Perhaps most notably, dominator coloring of digraph are particularly interesting for the fact that it is not generally the case that $\chi_{d}(H)\leq\chi_{d}(D)$ for a subgraph $H$ of a digraph $D$. The example provided, and only known, is that $\chi_{d}(P_{4})=3>2=\chi_{d}(C_{4})$ (notice that the orientations of each which attain minimum dominator colorings do indeed directly satisfy this property). This is a major deviation from what is expected from previous results in graph coloring, demonstrating the mathematical intrigue of dominator colorings of digraphs. Adding to this intrigue, we also saw that the dominator chromatic number of a digraph is not bounded by the maximum degree of the digraph. To aid in the future study of this deviance, we introduced the graph invariant $\varsigma^{\star}(D)$ which tells us how much orienting a graph can affect results in vertex coloring and domination.

To conclude this paper, we recognize some of the many potential avenues for further exploration into the topic of dominator colorings of digraphs.
\begin{problem}
Which digraphs satisfy $\chi_{d}(D)=\chi(D)$?
\end{problem}
Notice that by answering this problem we are answering an important question about the graph invariant $\varsigma^{\star}(D)$, namely which graphs admit an orientation such that $\varsigma^{\star}(D)=0$.
\begin{problem}
When is $\chi_{d}(D)$ invariant under all possible orientations of the underlying graph $G_{D}$?
\end{problem}
Notice that this problem may be restricted in the possibly more convenient manner.
\begin{problem}
When is $\chi_{d}(D)$ invariant under arc reversal? I.e., when does $\chi_{d}(D)=\chi_{d}(D^{-})$ where $D^{-}$ is the digraph on the same underlying graph $G_{D}$ as $D$, but the arc set $A(D^{-})=\{uv|vu\in A(D)\}$?
\end{problem}
\begin{problem}
How far can Theorem \ref{t3} be generalized in terms of domination among partite sets?
\end{problem}
Alternatively, we can attempt to generalize Theorem \ref{t3} in the following manner.
\begin{problem}
Which digraphs have dominator chromatic number $n$.
\end{problem}
\begin{problem}
Are there families of digraphs which satisfy $\limsup\limits_{n\to\infty}\frac{\chi_{d}(D)}{\Delta(D)}=r$ for some $r\in\mathbb{R}$? What are they? And if so, is this phenomenon related to $\varsigma^{\star}(D)$?
\end{problem}
Notice that the above problem may be defined for any of $\Delta^{+}(D)$, $\Delta^{-}(D)$, or $\Delta(D)=\Delta(G_{D})$.
\begin{problem}
How does $\varsigma^{\star}(D)$ behave with respect to graph operations? What about subgraph containment?
\end{problem}
\begin{problem}
Which families of graphs and subgraphs admit a positive dominator discrepancy $\delta(D,H)$?
\end{problem}

\bibliography{DCD1}
\end{document}